\newcommand{\C}{{\mathbb C}}
\newcommand{\N}{{\mathbb N}}
\newcommand{\R}{{\mathbb R}}
\newcommand{\abs}[2][\empty]{\ifx#1\empty\left|#2\right|%
\else#1\vert #2 #1\vert\fi}
\newcommand{\Cnt}[1][]{{\cal C}^{#1}}
\newcommand{\csub}{\subset\subset}
\newcommand{\defstyle}[1]{{\bf #1}}
\renewcommand{\emptyset}{\varnothing}
\newcommand{\eps}{\varepsilon}
\renewcommand{\implies}{\Rightarrow}
\newcommand{\norm}[2][\empty]{\ifx#1\empty\left\Vert#2\right\Vert%
\else#1\Vert #2 #1\Vert\fi}
\newcommand{\Filter}{\mathcal F}
\newcommand{\Powerset}{\mathcal P}
\renewcommand{\phi}{\varphi}
\newcommand{\Gen}{{\mathcal G}}
\newcommand{\GenR}{\widetilde\R}
\newcommand{\Mod}{{\mathcal M}}
\newcommand{\Null}{{\mathcal N}}
\newcommand{\caninf}{\rho}
\newcommand{\ster}[1]{{{}^* \mskip-1mu #1}}
\newcommand{\stermathop}{\sideset{^*}{}}
\newcommand{\Fin}{\mathop{\mathrm{Fin}}}
\newtheorem*{df}{Definition}
\newtheorem{thm}{Theorem}[section]
\newtheorem{lemma}[thm]{Lemma}
\newtheorem{prop}[thm]{Proposition}
\newtheorem{cor}[thm]{Corollary}
\newtheorem{ex}[thm]{Example}
\theoremstyle{definition}
\newtheorem*{rem}{Remark}
\newtheorem{exc}[thm]{Exercise}
\newtheorem*{nota}{Notation}
\begin{document}
\title{Nonstandard principles for generalized functions}
\author{Lecture notes\footnote{As presented at the Summer school \emph{Generalized Functions in PDE, Geometry, Stochastics and Microlocal Analysis}, Novi Sad, August--September 2010}\\H.\ Vernaeve}
\date{}
\maketitle

Someone knowledgeable in nonstandard analysis may get the feeling that in the nonlinear theory of generalized functions, too often one works directly on the nets and spends effort to obtain results that should be clear from general principles. We want to show that such principles can indeed be introduced and to illustrate their role to solve problems.\\
This text is intended as a tutorial on the use of nonstandard principles in generalized function theory intended for researchers in the nonlinear theory of generalized functions.

\section{Generalized and internal objects}\label{section_internal_objects}
We will define generalized objects by means of families (nets) of objects indexed by $(0,1)$, identified if they coincide for small $\eps$. The objects thus defined are simpler than the corresponding Colombeau objects \cite{Col84,GKOS,Ob92} and can be viewed as `raw material' from which Colombeau objects can be constructed (or also as a `test environment' for making conjectures). E.g., the generalized numbers that will be defined immediately, are the generalized constants of Egorov's algebra of generalized functions.

\begin{df}
A \defstyle{generalized real number} is an equivalence class of nets $(a_\eps)_\eps$ up to equality for small $\eps$. Formally, the set of generalized real numbers equals
\[\ster\R := \R^{(0,1)}/\{(a_\eps)_\eps\in \R^{(0,1)}: a_\eps = 0 \text{ for small }\eps\}.\]
We denote the equivalence class of the net $(a_\eps)_\eps$ by $[a_\eps]$.
\end{df}

By means of elementary set-theoretic operations, we now define generalized objects by means of nets, the so-called \defstyle{internal objects}:
\begin{enumerate}
\item[0.] By definition, generalized real numbers are internal.
\item Let $m\in\N$, $m>1$. To a net $((a_{1,\eps}, \dots, a_{m,\eps}))_\eps$ of $m$-tuples of real numbers, we associate the $m$-tuple of generalized real numbers
\[[(a_{1,\eps}, \dots, a_{m,\eps})] := ([a_{1,\eps}], \dots, [a_{m,\eps}]).\]
\item To a net $(f_\eps)_\eps$ of maps $\R\to \R$, we associate the map $\ster\R\to\ster\R$
\[[f_\eps]([x_\eps]) := [f_\eps(x_\eps)],\quad \forall [x_\eps]\in \ster\R.\]
\item To a net $(R_\eps)_\eps$ of binary relations defined on $\R\times\R$, we associate the binary relation on $\ster\R\times \ster\R$
\[[x_{\eps}] [R_\eps] [y_{\eps}]\iff x_\eps R_\eps y_\eps \text{ for small }\eps.\]
\item To a net $(A_\eps)_\eps$ of \emph{nonempty} sets of real numbers, we associate the set of generalized real numbers
\[[A_\eps]:=\{[a_\eps] : a_\eps\in A_\eps \text{ for small  }\eps\}.\]
\end{enumerate}
Any $m$-tuple of generalized numbers is internal (i.e., associated to a net of $m$-tuples of real numbers). On the other hand, as we will see, not every set of generalized numbers is internal, and neither is every map $\ster\R\to\ster\R$.

\begin{df}
For $\emptyset\ne A\subseteq\R$, we denote $\ster A :=[A]$ (= the internal object corresponding to the constant net $(A)_\eps$).
\end{df}
\begin{exc}\leavevmode
\begin{enumerate}
\item The new definition of $\ster\R$ coincides with the original one.
\item $[(a_{1,\eps}, \dots, a_{m,\eps})]=[(b_{1,\eps}, \dots, b_{m,\eps})]$ iff the nets coincide for small $\eps$.
\item Let $\varnothing\ne A_\eps\subseteq \R$ and $\varnothing\ne B_\eps\subseteq\R$. Then
\[[A_\eps]\subseteq [B_\eps] \iff A_\eps \subseteq B_\eps \text{ for small }\eps\]
(use the fact that $A_\eps\ne\varnothing$!).\\
Conclude that $[A_\eps]= [B_\eps]$ iff $A_\eps = B_\eps$ for small $\eps$.
\end{enumerate}
\end{exc}

We can inductively repeat these rules of construction to define more internal objects:
\begin{enumerate}
\item Let $m\in\N$, $m>1$. If $[a_{1,\eps}]$, \dots, $[a_{m,\eps}]$ are internal objects, then
\[[(a_{1,\eps}, \dots, a_{m,\eps})] := ([a_{1,\eps}], \dots, [a_{m,\eps}]).\]
\item Let $[A_\eps]$, $[B_\eps]$ be internal sets. If $f_\eps$ are maps $A_\eps\to B_\eps$, then
\[[f_\eps]: [A_\eps]\to [B_\eps]: [f_\eps]([x_\eps]) := [f_\eps(x_\eps)].\]
\item Let $[A_\eps]$, $[B_\eps]$ be internal sets. If $R_\eps$ are binary relations defined on $A_\eps\times B_\eps$, then $[R_\eps]$ is the binary relation on $[A_\eps]\times [B_\eps]$ defined by
\[[x_{\eps}] [R_\eps] [y_{\eps}]\iff x_\eps R_\eps y_\eps \text{ for small }\eps.\]
\item If $A_\eps$ are nonempty sets such that for each $a_\eps\in A_\eps$, $[a_\eps]$ has already been defined, then
\[[A_\eps]:=\{[a_\eps] : a_\eps\in A_\eps \text{ for small  }\eps\}.\]
\end{enumerate}

\begin{df}
The union of the objects thus defined 
is the class of \defstyle{internal objects}. In particular, by convention, the empty set is not internal (=external). We also extend the definition $\ster a:=[a]$ for the new types of objects.
\end{df}
Also for $a\in\R^m$ ($m\ge 1$), $\ster a= [a]\in\ster\R^m$. In practice, we will identify $\R^m$ as a subset of $\ster\R^m$, and therefore drop the stars in this case.

\begin{itemize}
\item[E.g.,]
\item If $a_\eps\in \R$, $\varnothing\ne A_\eps\subseteq\R$, and $f_\eps$ are maps $\R\to\R$, then
\[[(a_{\eps}, A_\eps, f_{\eps})] = ([a_{\eps}], [A_\eps], [f_{\eps}]).\]
\item If $A_\eps$ are nonempty subsets of $\R^m$, then
\[[A_\eps] = \{[a_\eps] : a_\eps\in A_\eps \text{ for small  }\eps\}\subseteq \ster\R^m.\]
\item If $f_\eps$ are maps $\R^m\to\R$, then
\[[f_\eps]: \ster\R^m \to\ster\R: [f_\eps]([x_\eps]) = [f_\eps(x_\eps)].\]
\item If $A_\eps$ are nonempty sets of maps $\R^m\to\R$, then
\[
[A_\eps] = \{[f_\eps]: f_\eps\in A_\eps \text{ for small }\eps\}.
\]
\item If $T_\eps$: $\Cnt[\infty](\R^m)\to\Cnt[\infty](\R^m)$, then
\[[T_\eps]: \ster{\Cnt[\infty]}(\R^m)\to\ster{\Cnt[\infty]}(\R^m): [T_\eps]([f_\eps]) := [T_\eps(f_\eps)].\]
\item \dots
\end{itemize}

\begin{exc}
\begin{enumerate}\leavevmode
\item Denote the graph of a map $f$: $\R\to\R$ by $G_f:= \{(x, f(x)): x\in \R\}$. Let $f_\eps$, $g_\eps$ be maps $\R\to\R$. Then $G_{[f_\eps]} = [G_{f_\eps}]$. Conclude that
\[[f_\eps] = [g_\eps] \iff f_\eps = g_\eps\text{ for small } \eps.\]
\item Denote the graph of a relation $R$ on $\R\times\R$ by $G_R:= \{(x, y): x R\, y\}$. Let $R_\eps$, $S_\eps$ be relations on $\R\times\R$. Then $G_{[R_\eps]} = [G_{R_\eps}]$. Conclude that
\[[R_\eps] = [S_\eps] \iff R_\eps = S_\eps\text{ for small } \eps.\]
\end{enumerate}
\end{exc}

More generally, we have in all cases:
\begin{align*}
[a_\eps]\in [A_\eps] &\iff a_\eps\in A_\eps \text{ for small }\eps\\
[a_\eps] = [b_\eps] &\iff a_\eps = b_\eps \text{ for small }\eps\\
[a_\eps][R_\eps][b_\eps] &\iff a_\eps R_\eps b_\eps \text{ for small }\eps.
\end{align*}
This tells us that, although we are basically just working with nets up to equality for small $\eps$, they can be identified with a large class of generalized objects in a generic way.

\begin{exc}
We denote by $\Powerset(A)$ the set of all \emph{nonempty} subsets of $A$.\\
(a) An internal set contains only internal objects.\footnote{To be precise, we consider (generalized) numbers to be elementless (as usual  in analysis).}\\
(b) Let $A$ be a nongeneralized nonempty set. Then $\ster\Powerset(A)$ is the set of all internal subsets of $\ster A$. (In particular, $\ster\Powerset(\R)\subsetneqq\Powerset(\ster\R)$).\\
(c) An internal object defined by a net of elements of $A$ is contained in $\ster A$.\\
(d) Let $A$, $B$ be nongeneralized nonempty sets. Then $\ster(A\times B)=\ster A \times \ster B$. In particular, $\ster (A^m)=(\ster A)^m$ (which we will therefore denote by $\ster A^m$).\\
(e) Let $B^A$ be the set of all maps $A\to B$. Then $\ster{(B^A)}$ is the set of all internal maps $\ster A\to \ster B$.
\end{exc}

\section{Some examples in $\ster\R$}
A lot of the structure of $\R$ can be transferred to $\ster\R$. E.g., $+$: $\ster\R\times\ster\R\to\ster\R$ can be defined $\eps$-wise, i.e., coinciding with the map $\ster +$ from the generic construction. In the case of maps on $\ster\R$, we will usually drop the stars, since they extend the usual operations on $\R$ (identifying $\R$ with a subset of $\ster\R$). On representatives, one easily sees that $+$, $\cdot$ are associative and commutative. In fact, for any binary operation $f$ on $\R$, the statement
\[(\forall x,y\in \R)(f(x,y) = f(y,x))\]
transfers to
\[(\forall x,y\in \ster\R)(f(x,y) = f(y,x)).\]
Similarly, one sees that $\ster\R$ is an ordered commutative ring.\\
In the case of (binary, say) relations on $\R$, some confusion may arise in dropping the stars. E.g., for $a,b\in\ster\R$, $a(\ster\!\!\ne) b$ is not equivalent with $\neg(a=b)$. We will drop the stars for $\le$; on the other hand, we will use $a \ne b$ for $\neg(a=b)$, $a\nleq b$ for $\neg(a\le b)$, and $a<b$ for $a\le b \land a\ne b$. The archimedean property of $\R$, i.e.,
\[(\forall x\in \R) (\exists n\in\N) (n\ge \abs{x})\]
does not transfer to $\ster\R$, at least, not as the statement that $\ster\R$ has the archimedean property. But
\[(\forall x\in \ster\R) (\exists n\in\ster\N) (n\ge \abs{x})\]
holds, and can be viewed as the transferred statement of the archimedean property.
Also, $\ster\R$ has zero divisors and is not totally ordered.

\begin{exc}
$(\forall x\in\ster\R)$ $(x\in \ster[0,1] \iff 0 \le x\le 1)$.
\end{exc}

\section{The transfer principle}\label{section_transfer}
We will examine in general which statements can be transferred (and in which way). First, we define precisely the kind of statements that we will consider.

\begin{df}
Our formulas are formal expressions containing symbols called \defstyle{variables} (usually denoted by $x,y,z,x_1,x_2,\dots$). Particular kinds of variables are \defstyle{relation variables} (usually denoted by $R, S, R_1, R_2, \dots$) and \defstyle{function variables} (usually denoted by $f, g, f_1, f_2, \dots$).\\
Inductively, \defstyle{terms} are defined by the following rules:
\begin{enumerate}
\item A variable is a term.
\item If $t_1$, \dots, $t_m$ are terms ($m>1$), then also $(t_1, \dots, t_m)$ is a term.
\item If $t$ is a term and $f$ is a function variable, then also $f(t)$ is a term.
\end{enumerate}

The occurrence of a variable $x$ in a formula $P$ is \defstyle{bound} if it occurs in a part of $P$ that is of the form $(\forall x\in t) Q$ or $(\exists x\in t)Q$. Otherwise, the variable $x$ is \defstyle{free} in $P$.

Inductively, \defstyle{formulas} are defined by the following rules:
\begin{enumerate}
\item[F1.] (atomic formulas) If $t_1$, $t_2$ are terms and $R$ is a relation variable, then $t_1=t_2$, $t_1\in t_2$ and $t_1 R\, t_2$ are formulas.
\item[F2.] If $P$, $Q$ are formulas
, then $P\& Q$ is a formula.
\item[F3.] If $P$ is a formula, $x$ is a variable free in $P$ and $t$ is a term in which $x$ does not occur, then $(\exists x\in t) P$ is a formula.
\item[F4.] If $P$ is a formula, $x$ is a variable free in $P$ and $t$ is a term in which $x$ does not occur, then $(\forall x\in t) P$ is a formula.
\item[F5.]\label{rule_impl} If $P$, $Q$ are formulas
, then $P \implies Q$ is a formula.
\item[F6.]\label{rule_neg} If $P$ is a formula, then $\neg P$ is a formula.
\item[F7.]\label{rule_or} If $P$, $Q$ are formulas
, then $P \vee Q$ is a formula.
\end{enumerate}

A \defstyle{sentence} is a formula in which all occurring free variables are substituted by objects, which we call the \defstyle{constants} or \defstyle{parameters} of the sentence. The meaning associated to a sentence is as it occurs in the normal use within mathematics (we will not formalize this; also, we introduce extra brackets in formulas to make clear the precedence of the operations).
\end{df}

\begin{nota}
We denote $t(x_1,\dots, x_m)$ (or shortly $t(x_j)$) for a term $t$ in which the only occurring variables are $x_1$, \dots, $x_m$. We denote by $t(c_1,\dots, c_m)$ (or shortly $t(c_j)$) the term $t$ in which the variable $x_j$ has been substituted by the object $c_j$ (for $j=1,\dots, m$).\\
Similarly, we denote $P(x_1,\dots, x_m)$ (or shortly $P(x_j)$) for a formula $P$ in which the only occurring \emph{free} variables are $x_1$, \dots, $x_m$. We denote by $P(c_1,\dots, c_m)$ (or shortly $P(c_j)$) the formula $P$ in which the variable $x_j$ has been substituted by the object $c_j$ (for $j=1,\dots, m$).
\end{nota}

Now we extend our observation in \S \ref{section_internal_objects} about equality of internal objects to more general formulas:
\begin{df}
A formula $P(x_j)$ 
is called \defstyle{transferrable} if for all internal objects $[c_{j,\eps}]$,
\[P(c_{j,\eps}) \mbox{ is true for small }\eps\]
is equivalent with
\[P([c_{j,\eps}]) \mbox{ is true}.\]
\end{df}

\begin{lemma}\label{term-transfer}
Let $t(x_j)$ be a term
. For internal objects $[c_{j,\eps}]$, we have
\[[t(c_{j,\eps})] = t([c_{j,\eps}]).\]
\end{lemma}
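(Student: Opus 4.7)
The natural approach is structural induction on the term $t$, following the three clauses of the inductive definition of terms (variable; tuple of terms; function applied to a term). Throughout, I will use the definitions of internal tuple and internal map given just before, together with the consistency facts recorded in the unnumbered ``more generally'' list (in particular the fact that $[a_\eps]=[b_\eps]$ iff $a_\eps=b_\eps$ for small $\eps$, which lets us check identities componentwise).

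First, the base case. If $t$ is a variable $x_k$, then $t(c_{j,\eps})=c_{k,\eps}$ and $t([c_{j,\eps}])=[c_{k,\eps}]$, so $[t(c_{j,\eps})]=[c_{k,\eps}]=t([c_{j,\eps}])$ trivially.

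Next, the tuple case. Suppose $t=(t_1,\dots,t_m)$, and assume the claim holds for $t_1,\dots,t_m$. Then by the definition of an internal $m$-tuple,
\[
[t(c_{j,\eps})] = [(t_1(c_{j,\eps}),\dots,t_m(c_{j,\eps}))] = ([t_1(c_{j,\eps})],\dots,[t_m(c_{j,\eps})]),
\]
and by the induction hypothesis this equals $(t_1([c_{j,\eps}]),\dots,t_m([c_{j,\eps}]))=t([c_{j,\eps}])$.

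Finally, the function-application case. Suppose $t=f(s)$ where $f$ is a function variable (itself one of the $x_j$, say $x_k$) and $s$ is a term for which the claim is already known. Then $c_{k,\eps}$ is a map $A_\eps\to B_\eps$ between certain internal sets, and $[c_{k,\eps}]$ is the corresponding internal map $[A_\eps]\to[B_\eps]$ defined by $[c_{k,\eps}]([z_\eps]):=[c_{k,\eps}(z_\eps)]$. Applying this to $z_\eps=s(c_{j,\eps})$, and then invoking the induction hypothesis $[s(c_{j,\eps})]=s([c_{j,\eps}])$, I obtain
\[
[t(c_{j,\eps})] = [c_{k,\eps}(s(c_{j,\eps}))] = [c_{k,\eps}]\bigl([s(c_{j,\eps})]\bigr) = [c_{k,\eps}]\bigl(s([c_{j,\eps}])\bigr) = t([c_{j,\eps}]).
\]

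The only point that requires any care is the function-application step, where one must remember that the function symbol $f$ is itself one of the substituted parameters and hence the constant $[c_{k,\eps}]$ is an internal map which, by construction, is evaluated on representatives; but this is precisely the content of the definition of internal maps in Section \ref{section_internal_objects}. There is no real obstacle, because all three clauses of the term definition are matched one-for-one by the corresponding clauses of the internal-object construction.
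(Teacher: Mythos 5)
Your proof is correct and follows exactly the same route as the paper's: structural induction on the three clauses of the term definition, with the function-application case handled by noting that the function variable is itself one of the substituted internal parameters (the paper writes its representative as $\phi_\eps$ where you write $c_{k,\eps}$). No discrepancies.
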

\begin{proof}
1. If $t$ is a variable, this is clear.\\
2. Let $t_1$, \dots, $t_m$ be terms. For a term $(t_1, \dots, t_m)$, we find inductively,
\begin{align*}
[(t_1,\dots, t_m)(c_{j,\eps})] &= [(t_1(c_{j,\eps}), \dots, t_m(c_{j,\eps}))] = ([t_1(c_{j,\eps})], \dots, [t_m(c_{j,\eps})])\\
&= (t_1([c_{j,\eps}]), \dots, t_m([c_{j,\eps}])) = (t_1,\dots, t_m)([c_{j,\eps}]).
\end{align*}
3. Let $t(x_j)$ be a term and $f$ a function variable. For a term $f(t)$, we find inductively,
\[[f(t)(\phi_\eps, c_{j,\eps})] = [\phi_\eps(t(c_{j,\eps}))] = [\phi_\eps]([t(c_{j,\eps})]) = [\phi_\eps](t([c_{j,\eps}])) = f(t)([\phi_\eps], [c_{j,\eps}]).\]
\end{proof}

\begin{prop}\label{prop_up_down_transfer}
Let $P(x_j)$ be a formula formed by applying rules F1--F4 only. Then $P(x_j)$ is transferrable.
\end{prop}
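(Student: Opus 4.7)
The plan is a structural induction on $P$, using Lemma \ref{term-transfer} throughout so that any term evaluated at internal parameters $[c_{j,\eps}]$ equals the class $[t(c_{j,\eps})]$ of its $\eps$-wise values. For atomic F1 formulas $t_1=t_2$, $t_1\in t_2$, and $t_1\,R\,t_2$ (with a relation parameter $R=[R_\eps]$), each case reduces directly, via the term lemma, to one of the three ``more generally'' equivalences displayed just before the definition of transferrability. For F2, ``$P_1\&P_2$ true for small $\eps$'' is equivalent to the conjunction of ``$P_1$ true for small $\eps$'' and ``$P_2$ true for small $\eps$'', since the intersection of two right-neighborhoods of $0$ is again a right-neighborhood of $0$; transferrability of $P_1\&P_2$ then follows from the inductive hypotheses for $P_1$ and $P_2$.

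For F3 applied to $(\exists x\in t)\,Q$, use Lemma \ref{term-transfer} to identify $t([c_{j,\eps}])$ with $[t(c_{j,\eps})]$, an internal set arising from a net of nonempty sets. Forward: given a witness $b_\eps\in t(c_{j,\eps})$ with $Q(c_{j,\eps},b_\eps)$ for every $\eps\in(0,\eps_0)$, extend the net arbitrarily on $[\eps_0,1)$ using nonemptiness; then $[b_\eps]\in t([c_{j,\eps}])$ and the inductive hypothesis on $Q$ gives $Q([c_{j,\eps}],[b_\eps])$. Backward: a witness $[b_\eps]$ provides $b_\eps\in t(c_{j,\eps})$ for small $\eps$ and, by induction, $Q(c_{j,\eps},b_\eps)$ for small $\eps$; intersecting the two right-neighborhoods of $0$ yields the $\eps$-wise witness. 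The forward direction of F4 is similarly immediate: for any $[b_\eps]\in t([c_{j,\eps}])$, combining $b_\eps\in t(c_{j,\eps})$ for small $\eps$ with the hypothesis $(\forall x\in t(c_{j,\eps}))\,Q(c_{j,\eps},x)$ for small $\eps$ and invoking induction gives $Q([c_{j,\eps}],[b_\eps])$.

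The one delicate step is the backward direction of F4. Assume $(\forall x\in t([c_{j,\eps}]))\,Q([c_{j,\eps}],x)$ and suppose for contradiction that $(\forall x\in t(c_{j,\eps}))\,Q(c_{j,\eps},x)$ fails for small $\eps$, i.e., the set $S=\{\eps:\exists b\in t(c_{j,\eps}),\,\neg Q(c_{j,\eps},b)\}$ has $0$ as accumulation point. For $\eps\in S$ choose $b_\eps\in t(c_{j,\eps})$ with $\neg Q(c_{j,\eps},b_\eps)$; for $\eps\notin S$ choose $b_\eps\in t(c_{j,\eps})$ arbitrarily. Then $[b_\eps]\in t([c_{j,\eps}])$, so $Q([c_{j,\eps}],[b_\eps])$ holds, and the inductive hypothesis upgrades this to ``$Q(c_{j,\eps},b_\eps)$ for all $\eps$ in some interval $(0,\eps_0)$''---a full right-neighborhood of $0$. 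Since $S\cap(0,\eps_0)\ne\varnothing$, any $\eps$ in the intersection contradicts the choice of $b_\eps$. The expected obstacle is exactly this step: it exploits the fact that ``for small $\eps$'' is a full right-neighborhood condition, which is strong enough to meet any cofinal set $S$ and produce the contradiction.
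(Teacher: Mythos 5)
Your proof is correct and follows essentially the same route as the paper's: structural induction using Lemma \ref{term-transfer} for terms, with the only delicate point being the direction of F4 where a cofinal set of bad parameters is turned into a genuine internal counterexample by filling in the net with arbitrary elements of the (nonempty, for small $\eps$) sets $t(c_{j,\eps})$. The paper phrases this via a decreasing sequence $\eps_n\to 0$ rather than your set $S$ accumulating at $0$, but the argument is the same.
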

\begin{proof}
F1. For atomic formulas, we have observed this in \S \ref{section_internal_objects} (combined with lemma \ref{term-transfer}). We proceed by induction for more general formulas. We put $c_j:=[c_{j,\eps}]$.\\
F2. For a formula of the form $P(x_j)\&Q(x_j)$, we find inductively,
\begin{align*}
&P(c_j)\&Q(c_j) \mbox{ is true}\\
\iff & P(c_{j,\eps}) \mbox{ is true for small $\eps$, and } Q(c_{j,\eps}) \mbox{ is true for small $\eps$}\\
\iff & P(c_{j,\eps})\&Q(c_{j,\eps}) \mbox{ is true for small }\eps.
\end{align*}
F3. For a formula of the form $(\exists x\in t(x_j))P(x, x_j)$, we find inductively,
\begin{align*}
&(\exists x\in t(c_j))P(x, c_j) \mbox{ is true}\\
\iff &\mbox{there exists } c \in t(c_j) \mbox{ such that } P(c, c_j) \mbox{ is true}\\
\iff &\mbox{there exists $(c_\eps)_\eps$ with } c_\eps\in t(c_{j,\eps}), \mbox{ for small $\eps$}\\
&\mbox{such that } P(c_\eps, c_{j,\eps}) \mbox{ is true for small }\eps\\
\iff &(\exists x\in t(c_{j,\eps})) P(x, c_{j,\eps}) \mbox{ is true for small }\eps.
\end{align*}
F4. For a formula of the form $(\forall x\in t(x_j))P(x,x_j)$, we find inductively,
\begin{align*}
&(\forall x\in t(c_j))P(x,c_j) \mbox{ is true}\\
\iff &\mbox{for each $[c_\eps]$ with } c_\eps\in t(c_{j,\eps}) \mbox{ for small $\eps$}, \quad P([c_\eps],c_j) \mbox{ is true}\\
\iff &\mbox{if } c_\eps\in t(c_{j,\eps}) \mbox{ for small $\eps$, then } P(c_\eps,c_{j,\eps}) \mbox{ is true for small $\eps$}.
\end{align*}
We show that this is still equivalent with: $(\forall x\in t(c_{j,\eps}))P(x,c_{j,\eps})$ is true for small $\eps$.\\
$\Rightarrow$: Suppose that $(\forall \eta)$ $(\exists \eps\le\eta)$ $(\exists x\in t(c_{j,\eps}))$ $\neg P(x,c_{j,\eps})$. Then we can find a decreasing sequence $(\eps_n)_{n\in\N}$ tending to $0$ and $c_{\eps_n}\in t(c_{j,\eps_n})$ such that $\neg P(c_{\eps_n},c_{j,\eps_n})$, $\forall n$. Since $t(c_j)$ is internal, $t(c_j)\ne\emptyset$. Hence we can find $c_\eps\in t(c_{j,\eps})$, for small $\eps\notin\{\eps_n: n\in\N\}$. By assumption, $P(c_\eps,c_{j,\eps})$ is true for small $\eps$, contradicting $\neg P(c_{\eps_n},c_{j,\eps_n})$, $\forall n$.\\
$\Leftarrow$: Let $c_\eps\in t(c_{j,\eps})$, for small $\eps$. Then by assumption, $P(c_\eps, c_{j,\eps})$ for small $\eps$.
\end{proof}

We now obtain the transfer principle as an $\eps$-free version of the previous proposition:
\begin{thm}[Transfer Principle]\label{thm_up-down-transfer}
Let $P(a_1,\dots,a_m)$ be a sentence formed by applying rules F1--F4 only, in which the constants $a_j$ are nongeneralized objects. Then $P(a_1, \dots, a_m)$ is true iff $P(\ster a_1, \dots, \ster a_m)$ is true.
\end{thm}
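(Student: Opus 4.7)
The plan is to derive the Transfer Principle as an immediate corollary of Proposition \ref{prop_up_down_transfer} by specializing to constant nets. Concretely, given nongeneralized objects $a_1, \dots, a_m$, I would define the constant nets $c_{j,\eps} := a_j$ for every $\eps \in (0,1)$. By the definition of the internal object associated to a constant net (and the convention $\ster a_j = [a_j]$ extended to all types of objects), we have $[c_{j,\eps}] = \ster a_j$ for each $j$.

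Next I would apply Proposition \ref{prop_up_down_transfer} to the formula $P(x_1, \dots, x_m)$ and the internal parameters $[c_{j,\eps}] = \ster a_j$. The proposition gives the equivalence
\[
P(\ster a_1, \dots, \ster a_m) \text{ is true} \iff P(c_{1,\eps}, \dots, c_{m,\eps}) \text{ is true for small } \eps.
\]
The right-hand side is the statement \emph{``$P(a_1,\dots, a_m)$ is true for small $\eps$''}, but since $c_{j,\eps}$ does not depend on $\eps$ at all, the truth value of $P(a_1,\dots,a_m)$ is a fixed proposition independent of $\eps$. Hence it holds for small $\eps$ if and only if it holds outright, yielding the desired equivalence
\[
P(a_1,\dots,a_m) \text{ is true} \iff P(\ster a_1, \dots, \ster a_m) \text{ is true}.
\]

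There is essentially no obstacle here beyond the bookkeeping: all the real content lies in Proposition \ref{prop_up_down_transfer}, and in particular in its F4 step where a genuine argument (the sequence extraction and the use of nonemptiness of internal sets) was required. The only minor point to verify is that Proposition \ref{prop_up_down_transfer} applies equally well when the ``free variables'' are actually already substituted by (the constant-net versions of) the parameters $a_j$, which is immediate because the notion of transferrability was stated uniformly for all internal parameters $[c_{j,\eps}]$, including constant ones.
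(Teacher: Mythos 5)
Your proof is correct and matches the paper's intent exactly: the paper presents the Transfer Principle as ``an $\eps$-free version'' of Proposition \ref{prop_up_down_transfer} with no further argument, which is precisely your specialization to constant nets $c_{j,\eps}=a_j$ together with the observation that an $\eps$-independent statement holds for small $\eps$ iff it holds outright.
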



\begin{ex}\leavevmode
Transfer fails for $P\vee Q$, e.g.\ for the sentence $(\forall x\in\R)$ $( x=0 \vee (\exists y\in\R)(x\cdot y = 1))$ and for $(\forall x,y\in\R)$ $(x\le y \vee y\le x)$.
\end{ex}

For rule F5, we have a transferrable substitute:
\begin{enumerate}
\item[F5'.] $[(\exists x\in t)P]\,\&\, [(\forall x\in t) (P\implies Q)]$.
\end{enumerate}
Since the implication is the part that we want to be able to transfer, we will refer to the condition $(\exists x\in y)P$ as the \defstyle{side condition} for the implication.

\begin{prop}\label{prop_extended_transfer}\leavevmode
Let $P(x_j)$ be a formula formed by applying rules F1--F4 and F5' only. Then $P(x_j)$ is transferrable.
\end{prop}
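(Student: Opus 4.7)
My plan is to extend the induction of Proposition \ref{prop_up_down_transfer} by handling the F5' case. Assume inductively that $P(x,x_j)$ and $Q(x,x_j)$ are transferrable, and consider the formula
\[R(x_j):= [(\exists x\in t(x_j))\,P(x,x_j)]\,\&\,[(\forall x\in t(x_j))\,(P(x,x_j)\implies Q(x,x_j))],\]
with conjuncts $S_1$ and $S_2$ respectively. Writing $c_j := [c_{j,\eps}]$, the goal is to prove $R(c_j)$ iff $R(c_{j,\eps})$ holds for small $\eps$. The first conjunct $S_1$ is already transferrable by the F3 case of the preceding proposition, but the second conjunct $S_2$ will \emph{not} be separately transferrable: I will need to handle the whole conjunction together, which is precisely why F5' bundles the side condition with the implication.

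For the easy direction ($\Leftarrow$), I would assume $R(c_{j,\eps})$ for small $\eps$. Then $S_1(c_j)$ follows by F3. To obtain $S_2(c_j)$, take any $[a_\eps]\in t(c_j)$ with $P([a_\eps],c_j)$; by transferrability of the atomic membership relation and of $P$, this gives $a_\eps\in t(c_{j,\eps})$ and $P(a_\eps,c_{j,\eps})$ for small $\eps$. Combined with $S_2(c_{j,\eps})$ for small $\eps$, this yields $Q(a_\eps,c_{j,\eps})$ for small $\eps$, and hence $Q([a_\eps],c_j)$ by transferrability of $Q$.

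The harder direction ($\Rightarrow$) will follow the same adversarial-sequence pattern as the F4 proof. Assuming $R(c_j)$, F3 gives $S_1(c_{j,\eps})$ for small $\eps$; it remains to show $S_2(c_{j,\eps})$ for small $\eps$. Suppose for contradiction this fails: then there is a decreasing sequence $\eps_n\to 0$ and $a_{\eps_n}\in t(c_{j,\eps_n})$ with $P(a_{\eps_n},c_{j,\eps_n})$ and $\neg Q(a_{\eps_n},c_{j,\eps_n})$. I would extend $(a_{\eps_n})$ to a net on all small $\eps$ by invoking $S_1(c_{j,\eps})$ to pick $a_\eps\in t(c_{j,\eps})$ with $P(a_\eps,c_{j,\eps})$ at the remaining small $\eps$. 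Then $[a_\eps]\in t(c_j)$ and $P([a_\eps],c_j)$ both hold by transferrability, so $S_2(c_j)$ forces $Q([a_\eps],c_j)$, and transferrability of $Q$ yields $Q(a_\eps,c_{j,\eps})$ for small $\eps$, contradicting $\neg Q(a_{\eps_n},c_{j,\eps_n})$ on the subsequence. The main obstacle here is precisely that an unrestricted implication is not transferrable (vacuous truth at the standard level need not lift to the nonstandard level); the side condition $(\exists x\in t)\,P$ supplies exactly the witnesses needed to run this net construction and thereby remove the obstruction.
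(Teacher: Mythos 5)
Your proposal is correct and follows essentially the same route as the paper's proof: the easy direction lifts a witness of the side condition from the net level, and the hard direction uses the same adversarial decreasing sequence $\eps_n\to 0$ together with the side condition to fill in the net at the remaining $\eps$, producing an internal witness that contradicts $S_2(c_j)$. Your closing remark correctly identifies the role of the side condition as supplying the witnesses needed off the bad subsequence, which is exactly why F5' is transferrable while F5 is not.
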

\begin{proof}
We only have to include rule F5' into the inductive proof of proposition \ref{prop_up_down_transfer}.\\
(1) Let $[(\exists x\in t(c_{j,\eps})) P(x,c_{j,\eps})]$ $\&$ $[(\forall x\in t(c_{j,\eps}))$ $(P(x,c_{j,\eps})\implies Q(x,c_{j,\eps}))]$ hold for small $\eps$. By induction and by proposition \ref{prop_up_down_transfer}, $(\exists x\in t(c_j))P(x,c_j)$. Let $c=[c_\eps]\in t(c_j)$ such that $P(c, c_j)$. By induction, $P(c_\eps, c_{j,\eps})$ for small $\eps$. Then by assumption, $Q(c_\eps, c_{j,\eps})$ for small $\eps$. Hence $Q(c, c_j)$ by induction.\\
(2) let $(\exists x\in t(c_j))P(x,c_j)$ and $(\forall x\in t(c_j))$ $(P(x,c_j)\implies Q(x,c_j))$. Then by induction and by proposition \ref{prop_up_down_transfer}, $(\exists x\in t(c_{j,\eps}))P(x,c_{j,\eps})$, for small $\eps$. Suppose that $(\forall\eta>0)$ $(\exists \eps\le \eta)$ $(\exists x\in t(c_{j,\eps}))$ $(P(x, c_{j,\eps})$ $\&$ $\neg Q(x, c_{j,\eps}))$.
Then we find a decreasing sequence $(\eps_n)_{n\in\N}$ tending to $0$ and $c_{\eps_n}\in t(c_{j,\eps})$ such that $P(c_{\eps_n}, c_{j,\eps_n})$ and $\neg Q(c_{\eps_n}, c_{j,\eps_n})$, $\forall n$. Choose $c_\eps\in t(c_{j,\eps})$ with $P(c_\eps, c_{j,\eps})$ if $\eps\notin\{\eps_n: n\in\N\}$. Then $c:=[c_\eps]\in t(c_j)$ and $P(c,c_j)$ holds by induction. By assumption, $Q(c,c_j)$ holds. By induction, $Q(c_\eps, c_{j,\eps})$ holds for small $\eps$, contradicting $\neg Q(c_{\eps_n}, c_{j,\eps_n})$, $\forall n$.
\end{proof}

\begin{ex}
Often, some information can be transferred from a non-transferrable sentence by reformulating. E.g., the fact that every nonzero element in $\R$ is invertible can also be written as
\[(\forall x\in\R\setminus\{0\}) (\exists y\in \R\setminus\{0\}) (xy=1).\]
Hence, by transfer,
\[(\forall x\in\ster(\R\setminus\{0\})) (\exists y\in \ster(\R\setminus\{0\})) (xy=1).\]
No contradiction results with the fact that $\ster\R$ is not a field, since $\ster(\R\setminus\{0\})$ is the set of those $[x_\eps]\in\ster\R$ with $x_\eps\ne 0$, for small $\eps$, and is a strict subset of $\ster\R\setminus\{0\}$.

One can also obtain some (restricted) information out of a disjunction. E.g., the fact that the order on $\R$ is total can be written as
\[(\forall x,y\in \R) (x\le y \vee y\le x),\]
which is not transferrable. But the equivalent statement
\[(\forall x,y\in \R) (\exists e\in\R) (e^2 = e \ \&\ x e\le y e \ \&\ y(1-e)\le x(1-e))\]
is transferrable. 
\end{ex}

\section{The internal definition principle (I.D.P.)}
We will see that internal sets satisfy a lot of properties which are not shared by arbitrary sets of generalized objects. It is therefore interesting to have an easy sufficient condition to check that a set is internal.
\begin{thm}[Internal Definition Principle]\label{thm_IDP}
Let $P(x,x_j)$ be a transferrable formula. Let $A$, $a_j$ be internal objects. Let $\{x\in A: P(x,a_j)\}\ne\emptyset$. Then $\{x\in A: P(x,a_j)\}$ is internal.\\
Explicitly, if $A=[A_\eps]$ and $a_j=[a_{j,\eps}]$, then $\{x\in A: P(x,a_j)\} = [\{x\in A_\eps: P(x,a_{j,\eps})\}]$.
\end{thm}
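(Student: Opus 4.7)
The plan is to prove the explicit equality $\{x\in A: P(x,a_j)\} = [B_\eps]$ directly, where $B_\eps := \{x\in A_\eps: P(x,a_{j,\eps})\}$, after first ensuring that $B_\eps$ is nonempty for small $\eps$ so that $[B_\eps]$ is a legitimate internal set in the sense of \S\ref{section_internal_objects}.

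First I would establish nonemptiness. By hypothesis there is some $c=[c_\eps]\in A$ with $P(c,a_j)$ true. Since $c\in[A_\eps]$, the general membership rule for internal sets gives $c_\eps\in A_\eps$ for small $\eps$. Because $P$ is transferrable and $P([c_\eps],[a_{j,\eps}])$ holds, we obtain $P(c_\eps,a_{j,\eps})$ for small $\eps$. Combining, $c_\eps\in B_\eps$ for small $\eps$, so $B_\eps\ne\emptyset$ for small $\eps$; for the remaining $\eps$ I would redefine $B_\eps$ to be any nonempty set (harmless up to equality for small $\eps$), making $[B_\eps]$ a well-defined internal subset of $A$.

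Next I would prove the two inclusions. For $\subseteq$: if $[c_\eps]\in\{x\in A: P(x,a_j)\}$, then as above, transferability of $P$ and the characterization of $A=[A_\eps]$ give $c_\eps\in A_\eps$ and $P(c_\eps,a_{j,\eps})$ for small $\eps$, hence $[c_\eps]\in[B_\eps]$. For $\supseteq$: if $[c_\eps]\in[B_\eps]$, then $c_\eps\in A_\eps$ and $P(c_\eps,a_{j,\eps})$ hold for small $\eps$, so $[c_\eps]\in A$, and transferability in the other direction yields $P([c_\eps],a_j)$. Both inclusions amount to one application each of transferability combined with membership in $[A_\eps]$; no new combinatorial arguments are needed.

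The main (and essentially only) obstacle is the bookkeeping around nonemptiness: the construction of $[A_\eps]$ in \S\ref{section_internal_objects} requires the representatives to be nonempty, whereas a priori $B_\eps$ could be empty on a set of $\eps$ accumulating at $0$. The hypothesis $\{x\in A: P(x,a_j)\}\ne\emptyset$ is precisely what rules this out, via the transferability step in the second paragraph; without it, $[B_\eps]$ could not even be written down as an internal object. Once that point is handled, the theorem reduces to a direct unwinding of the definition of \emph{transferrable}.
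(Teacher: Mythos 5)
Your proposal is correct and follows essentially the same route as the paper: establish via the hypothesis and transferability that the net $B_\eps=\{x\in A_\eps: P(x,a_{j,\eps})\}$ is nonempty for small $\eps$, then identify membership in the two sets $\eps$-wise. The paper phrases your two inclusions as a single chain of equivalences and makes explicit the one point you leave implicit, namely that every element of $\{x\in A: P(x,a_j)\}$ is itself internal (because $A$ is internal), which is what licenses writing a generic element of that set as $[c_\eps]$.
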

\begin{proof}
Let $\{x\in A: P(x,a_j)\}\ne\emptyset$, i.e., $(\exists x\in A)$ $P(x,a_j)$. By transfer, $(\exists x\in A_\eps)$ $P(x,a_{j,\eps})$ holds for small $\eps$. For an internal object $c=[c_\eps]$, we have by transfer,
\begin{align*}
c\in \{x\in A: P(x,a_j)\}
&\iff c\in A \text{ and } P(c,a_j)\\
&\iff c_\eps \in A_\eps \text{ and } P(c_\eps ,a_{j,\eps}), \text{ for small }\eps\\
&\iff c_\eps \in \{x\in A_\eps: P(x, a_{j,\eps})\}, \text{ for small }\eps\\
&\iff c\in [\{x\in A_\eps: P(x, a_{j,\eps})\}],
\end{align*}
where the latter internal set is well-defined since the corresponding net is a net of non-empty sets (for small $\eps$). Further, as $A$ is internal, $A$ has only internal elements. Hence $\{x\in A: P(x,a_j)\}=[\{x\in A_\eps: P(x,a_{j,\eps})\}]$ is internal.
\end{proof}

\begin{cor}\label{cor_star_of_a_set}
Let $P(x,x_j)$ be a transferrable formula with $x$, $x_j$ as only free variables. Let $A$, $a_j$ be nongeneralized objects. If $\{x\in A: P(x,a_j)\}\ne\emptyset$, then
\[\ster\{x\in A: P(x,a_j)\} = \{x\in \ster A: P(x,\ster a_j)\}.\]
\end{cor}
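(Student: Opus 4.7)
The plan is to apply the Internal Definition Principle (Theorem \ref{thm_IDP}) to the constant-net representations of $\ster A$ and $\ster a_j$. Set $A_\eps := A$ and $a_{j,\eps} := a_j$ for all $\eps$, so that $\ster A = [A_\eps]$ and $\ster a_j = [a_{j,\eps}]$ by the convention $\ster a := [a]$ for nongeneralized objects.

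Before invoking IDP, I need to verify its nonemptiness hypothesis for the internal objects $\ster A$ and $\ster a_j$, namely that $\{x \in \ster A : P(x, \ster a_j)\} \ne \emptyset$. Pick some $c \in A$ with $P(c, a_j)$ true (which exists by the corollary's hypothesis). Since $A \subseteq \ster A$ under the standard identification and $c, a_j$ are nongeneralized (hence their constant-net classes are $\ster c = c$ and $\ster a_j$), transferrability of $P$ applied to the constant nets gives $P(c, \ster a_j)$. Therefore $c$ witnesses that $\{x \in \ster A : P(x, \ster a_j)\} \ne \emptyset$.

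Now IDP applies and yields
\[
\{x \in \ster A : P(x, \ster a_j)\} = [\{x \in A_\eps : P(x, a_{j,\eps})\}].
\]
Since $A_\eps$ and $a_{j,\eps}$ are constant in $\eps$, the net on the right-hand side is the constant net $(\{x \in A : P(x, a_j)\})_\eps$, whose class is by definition $\ster\{x \in A : P(x, a_j)\}$. Combining the two equalities gives the desired identity.

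The main obstacle, if any, is the bookkeeping at the junction between the "real" nonemptiness hypothesis of the corollary and the "internal" nonemptiness hypothesis required by IDP; this is resolved by the trivial transfer of a single witness via the transferrability of $P$. Everything else is a direct unpacking of the constant-net conventions.
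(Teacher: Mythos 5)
Your proposal is correct and follows essentially the same route as the paper: both reduce the corollary to the Internal Definition Principle applied to the constant nets $A_\eps = A$, $a_{j,\eps}=a_j$, so that $[\{x\in A_\eps: P(x,a_{j,\eps})\}]$ is the constant-net class $\ster\{x\in A:P(x,a_j)\}$. The only (harmless) difference is in how the nonemptiness precondition is discharged: you transfer a single witness $c\in A$ with $P(c,a_j)$ to verify the hypothesis of the I.D.P.\ as stated, whereas the paper just notes that $\ster B\ne\emptyset$ because $B\ne\emptyset$ and then cites the chain of equivalences from the I.D.P.'s proof.
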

\begin{proof}
By construction of internal sets, if $B=\{x\in A: P(x,a_j)\}$ is a nonempty (nongeneralized) set, then $\ster B$ is also not empty. The proof of the internal definition principle shows that then $\ster\{x\in A: P(x,a_j)\} = [\{x\in A: P(x,a_j)\}] = \{x\in \ster A: P(x, \ster a_j)\}$.
\end{proof}

\begin{exc}
Let $A,B,C$ be non-empty nongeneralized sets.\\
(a) $\ster(A\cap B)=\ster A\cap \ster B$ and $\ster (A\setminus B)\subseteq \ster A\setminus \ster B$.\\
(b) If $A, B\subseteq C$ and $\ster C$ is defined (i.e., the elements of $A$ and $B$ are `of the same type'), then $\ster(A\cup B) = \{a e + b (1-e): a\in A, b\in B, e\in\ster\R, e^2=e\}\supseteq \ster A\cup \ster B$.\\
(c) If $A$, $B$ are internal, then $A\cap B$ is internal or empty.\\
(d) If $A\subseteq B$ and $g$: $B\to C$ is an extension of $f$: $A\to C$, then $\ster g$ is an extension of $\ster f$.
\end{exc}

\section{Saturation and spilling principles}\label{section-saturation}
The principles in the previous sections give us an insight in which properties of generalized objects can be systematically obtained (and in an `$\eps$-free' way), but the properties are often hardly easier obtained than by working directly on the nets.\\The principles in this section will allow for quite some short-cuts in proofs, and will also suggest ways to discover properties that are not so easily guessed directly on the nets.
\begin{df}
A family of sets $(A_i)_{i\in I}$ has the \defstyle{finite intersection property} (F.I.P.) if for each finite subset $F \subseteq I$, $\bigcap_{i\in F}A_i\ne \emptyset$.
\end{df}
\begin{thm}[Saturation Principle]
Let $X$ be an internal set. For each $n\in\N$, let $A_n\subseteq X$ such that $A_n$ or $X\setminus A_n$ is internal. If $(A_n)_{n\in\N}$ has the F.I.P., then $\bigcap_{n\in\N} A_n$ is not empty.
\end{thm}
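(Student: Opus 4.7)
The plan is to produce a representative $(c_\eps)_\eps$ for a point of $\bigcap_n A_n$ by a diagonal construction, combining the classical saturation argument for internal sets with extra care for those $n$ where only $X\setminus A_n$ is internal. First I set up representatives: write $X=[X_\eps]$, partition $\N = I\sqcup J$ so that $A_n=[A_{n,\eps}]$ for $n\in I$ and $X\setminus A_n=[B_{n,\eps}]$ for $n\in J$, with $A_{n,\eps},B_{n,\eps}\subseteq X_\eps$. For each $N$ I form the finite intersection $V_N:=\bigcap_{k\in I,\,k\le N}A_k$; by the F.I.P., $V_N\supseteq\bigcap_{k\le N}A_k\ne\emptyset$, and since the intersection of finitely many internal sets is internal or empty (exercise in the IDP section), $V_N$ is a nonempty internal set. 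Write $V_N=[V_{N,\eps}]$ with $V_{N,\eps}\ne\emptyset$ for $\eps<\delta_N$, where $\delta_N$ is chosen decreasing to $0$.

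For each $n\in J$ and each $N\ge n$, the F.I.P.\ also yields $V_N\cap A_n\ne\emptyset$, i.e.\ $V_N\not\subseteq [B_{n,\eps}]$; by the subset characterisation for internal sets (Exercise 1.1(3)), this means $V_{N,\eps}\not\subseteq B_{n,\eps}$ on a set of $\eps$ cofinal near $0$. I now enumerate $J\times\N$ as $((n_\ell,k_\ell))_{\ell\in\N}$ (each pair exactly once), set $N_\ell:=\max(n_\ell,k_\ell)$, and inductively pick pairwise distinct $\eps_\ell$ in the (cofinal) set $\{\eps<\min(\delta_{N_\ell},1/k_\ell):V_{N_\ell,\eps}\not\subseteq B_{n_\ell,\eps}\}$, defining $c_{\eps_\ell}$ to be any element of $V_{N_\ell,\eps_\ell}\setminus B_{n_\ell,\eps_\ell}$. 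At each remaining $\eps<\delta_1$, with $N(\eps):=\max\{N:\eps<\delta_N\}$, I pick $c_\eps$ arbitrarily in $V_{N(\eps),\eps}$.

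The verification splits in two. For $m\in I$, only finitely many $\ell$ satisfy $N_\ell<m$ (those with $n_\ell<m$ and $k_\ell<m$); writing $\delta_m^*:=\min\{\eps_\ell:N_\ell<m\}$ (a positive finite minimum), for $\eps<\min(\delta_m,\delta_m^*)$ we have $c_\eps\in V_{K,\eps}\subseteq V_{m,\eps}\subseteq A_{m,\eps}$ for some $K\ge m$, hence $[c_\eps]\in A_m$. For $n\in J$, every $k\in\N$ equals $k_\ell$ for some $\ell$ with $n_\ell=n$, and since $\eps_\ell<1/k_\ell$ the subsequence $\{\eps_\ell:n_\ell=n\}$ accumulates at $0$; as $c_{\eps_\ell}\notin B_{n,\eps_\ell}$ for each such $\ell$, the set $\{\eps:c_\eps\notin B_{n,\eps}\}$ is cofinal near $0$, so $[c_\eps]\notin[B_{n,\eps}]$ and $[c_\eps]\in A_n$.

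The delicate point is that for $n\in J$ the condition $[c_\eps]\in A_n$ is only a cofinality statement on the net, not a ``for small $\eps$'' statement, and a finite intersection of cofinal subsets of $(0,1)$ need not be cofinal; one cannot hope to arrange $c_\eps$ to avoid \emph{all} the $B_{n,\eps}$ on a single cofinal set. The diagonalisation resolves this by allocating to each $n\in J$ its own cofinal subsequence $(\eps_\ell)_{n_\ell=n}$, while the bookkeeping $N_\ell=\max(n_\ell,k_\ell)$ ensures that any fixed $m\in I$ is violated by only finitely many $\eps_\ell$, preserving the stronger ``for all small $\eps$'' requirement of the internal $A_m$.
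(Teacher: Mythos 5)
Your proof is correct and follows essentially the same strategy as the paper's: reduce to finite intersections of the internal sets, observe that membership in a cointernal set is only a cofinality condition on a representative net, and diagonalize by reserving for each cointernal constraint its own sequence of exceptional $\eps$'s accumulating at $0$, while the bookkeeping guarantees that each internal constraint is violated at only finitely many of them. The one point to make explicit is that the inclusions $V_{K,\eps}\subseteq V_{m,\eps}\subseteq A_{m,\eps}$ must hold for \emph{every} $\eps$ rather than merely for small $\eps$ with a threshold depending on $K$ (since $K$ is unbounded along your exceptional points); this is arranged by taking the canonical representatives $V_{N,\eps}=\bigcap_{k\in I,\,k\le N}A_{k,\eps}$.
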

\begin{proof}
Let $(B_n)_{n\in\N}$, $(X\setminus C_n)_{n\in\N}$ be sequences of internal subsets of $X$ such that $B_1\cap\cdots \cap B_n\cap C_j\ne\emptyset$, for each $n,j\in\N$. It suffices to show that $\bigcap_{n\in\N} (B_n\cap C_n)\ne\emptyset$. Let $B_n=[B_{n,\eps}]$ and $X\setminus C_n=[X_\eps \setminus C_{n,\eps}]$. For $n,j\in\N$ with $j\le n$, let $x_{n,j}\in B_1\cap \cdots \cap B_n\cap C_j$. Since $X$ is internal, also $x_{n,j}=:[x_{n,j,\eps}]$ are internal. Then there exist $\eta_n\in (0,1/n)$ such that $x_{n,j,\eps}\in B_{1,\eps}\cap\cdots\cap B_{n,\eps}$, $\forall\eps\le \eta_n$, $\forall j\le n$. W.l.o.g., $(\eta_n)_{n\in\N}$ is decreasing. For any $a=[a_\eps]\in X$, we have $a\in C_n$ iff $\neg (a\in X\setminus C_n)$ iff $\neg (a_\eps\in X_\eps \setminus C_{n,\eps}$, for small $\eps)$ iff $(\forall \eta\in (0,1))$ $(\exists \eps\le\eta)$ $(a_\eps\in C_{n,\eps})$. Subsequently choose $\eps_{1,1} > \eps_{2,1}>\eps_{2,2} > \cdots > \eps_{n,1}>\eps_{n,2}>\cdots>\eps_{n,n} > \cdots$ ($n\in\N$) with $\eps_{n,j}\in (0, \eta_n)$ and such that $x_{\eps_{n,j}} :=x_{n,j,\eps_{n,j}} \in C_{j,\eps_{n,j}}$. Choose $x_\eps:= x_{n,1,\eps}$, if $\eta_{n+1}<\eps \le \eta_n$ and $\eps\notin\{\eps_{n,j}: n,j\in\N, j\le n\}$. Then for each $n\in\N$, $x_\eps\in B_{n,\eps}$ for small $\eps$, and $(\forall\eta\in (0,1))$ $(\exists \eps\le\eta)$ $(x_\eps\in C_{n,\eps})$. Hence $x:=[x_\eps]\in \bigcap_{n\in\N} (B_n\cap C_n)$.
\end{proof}
\begin{rem}
It is clear from the proof of the saturation principle that, instead of the F.I.P., it is sufficient to assume the slightly weaker property that for each finite number of internal sets $A_{n_1}$, \dots, $A_{n_k}$ and each $A_m$ with $X\setminus A_m$ internal, $A_{n_1}\cap \cdots \cap A_{n_k} \cap A_m\ne \emptyset$. In particular, nonempty cointernal sets have the F.I.P.
\end{rem}
\begin{cor}[Quantifier switching]
Let $X$ be an internal set. For each $n\in\N$, let $P_n(x,x_{n,j})$, $Q_n(x,y_{n,j})$ be transferrable formulas
. Let $a_{n,j}$, $b_{n,j}$ be internal constants. If $P_n$ gets stronger as $n$ increases (i.e., for each $n\in\N$ and $x\in X$, $P_{n+1}(x,a_{n+1,j})\implies P_n(x, a_{n,j})$) and if
\[(\forall n,m\in\N) (\exists x\in X) (P_n(x,a_{n,j}) \,\& \, \neg Q_m(x,b_{m,j})),
\]
then also
\[
(\exists x\in X) (\forall n\in\N) (P_n(x,a_{n,j}) \,\& \, \neg Q_n(x,b_{n,j})).
\]
\end{cor}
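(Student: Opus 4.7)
The plan is to apply the Saturation Principle to the family
\[\{A_n : n\in\N\}\cup\{B_n : n\in\N\},\]
where $A_n := \{x\in X : P_n(x, a_{n,j})\}$ and $B_n := \{x\in X : \neg Q_n(x, b_{n,j})\}$. Any element of $\bigcap_{n} (A_n\cap B_n)$ will then witness the conclusion of the corollary.

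First I would verify the internal/co-internal dichotomy required by the Saturation Principle. Each $A_n$ is non-empty (take $m=n$ in the hypothesis), and since $P_n$ is transferrable with internal parameters $a_{n,j}$, the Internal Definition Principle (Theorem \ref{thm_IDP}) makes $A_n$ internal. For $B_n$, I note that $\neg Q_n$ is typically \emph{not} transferrable, so IDP cannot be applied to it directly; instead I would consider $D_n := \{x\in X : Q_n(x,b_{n,j})\}$. If $D_n\ne\emptyset$, IDP gives that $D_n$ is internal, whence $B_n = X\setminus D_n$ is cointernal in $X$; if $D_n=\emptyset$, then $B_n=X$ is internal. Either way, $B_n$ or $X\setminus B_n$ is internal.

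Next I would check the weaker form of F.I.P.\ pointed out in the Remark after the Saturation Principle: any finite intersection of internal members of the family together with at most one cointernal member must be non-empty. Given internal $A_{n_1},\ldots,A_{n_k}$ (and possibly some $B_n$'s that happen to equal $X$, which impose no restriction) and at most one cointernal $B_m$, set $N := \max(n_1,\ldots,n_k,m)$. The monotonicity $P_{n+1}\implies P_n$ gives $A_N\subseteq A_{n_i}$ for every $i$, so it suffices to exhibit $x\in A_N\cap B_m$. The hypothesis of the corollary with $n=N$ and the given $m$ produces exactly such an $x$; in the case with no cointernal factor, the non-emptiness of $A_N$ alone does the job.

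Applying the Saturation Principle then furnishes $x\in\bigcap_n (A_n\cap B_n)$, which satisfies $P_n(x,a_{n,j})\,\&\,\neg Q_n(x,b_{n,j})$ for every $n$. The main subtlety I expect is precisely the failure of $\neg Q_n$ to be transferrable, which forces the detour through $D_n$ and the separate treatment of the degenerate case $D_n=\emptyset$; once that is handled, the argument is essentially bookkeeping, with the monotonicity of $P_n$ being exactly what collapses all the internal constraints into a single set $A_N$ so that the hypothesis can be applied verbatim.
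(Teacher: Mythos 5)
Your proposal is correct and follows essentially the same route as the paper: the same two families of sets, the same use of the I.D.P.\ applied to $Q_n$ (rather than $\neg Q_n$) to get cointernality, the same handling of the degenerate case, and the same reduction of finite intersections to $A_N\cap B_m$ via monotonicity before invoking the remark to the Saturation Principle. No gaps.
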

\begin{proof}
Let $B_n :=\{x\in X: P_n(x, a_{n,j})\}$ and $C_n :=\{x\in X: \neg Q_n(x, b_{n,j})\}$. By I.D.P., $B_n$, $X\setminus C_n$ are internal or empty. By assumption, $B_n$ are not empty and $B_{n+1}\subseteq B_n$, $\forall n$. If $X\setminus C_n$ is empty, then $C_n = X$, and $C_n$ can be dropped from the sequence. By assumption, for each $n,m\in\N$, $B_1\cap\cdots \cap B_n\cap C_m = B_n\cap C_m\ne\emptyset$. The result follows by (the remark to) the saturation principle.
\end{proof}
Just like the previous corollary, the corollaries known as overspill and underspill, which will soon be formulated, are convenient for practical use.

\begin{df}
Let $a,b\in\ster\R$. Then $a$ is called \defstyle{infinitely large} if $\abs{a}\ge n$, for each $n\in\N$; $a$ is called \defstyle{finite} if $\abs{a}\le N$, for some $N\in\N$; $a$ is called \defstyle{infinitesimal} if $\abs{a}\le 1/n$, for each $n\in\N$.
We denote $a\approx b$ iff $a-b$ is infinitesimal. We denote the set of finite elements of $\ster\R$ by $\Fin(\ster\R)$.
\end{df}

\begin{lemma}\label{lemma_finite}
Let $a\in\ster\R$. If for each infinitely large $m\in\ster\N$, $\abs{a}\le m$, then $a$ is finite.
\end{lemma}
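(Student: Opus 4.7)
The plan is to prove the contrapositive: assuming $a \in \ster\R$ is not finite, I exhibit an infinitely large $m \in \ster\N$ with $|a| \nleq m$. The natural tool is the Saturation Principle applied inside the internal set $X := \ster\N$.

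For each $n \in \N$, let $B_n := \{m \in \ster\N : m \ge n\}$; this is nonempty (it contains $n$) and so by the Internal Definition Principle is internal. Let $D := \{m \in \ster\N : |a| \le m\}$; the defining formula is atomic and hence transferable with $a$ as parameter, so by I.D.P.\ $D$ is either empty or internal. If $D = \emptyset$ then any infinitely large $m \in \ster\N$ (for instance $m := [\lceil 1/\eps \rceil]$) automatically satisfies $|a| \nleq m$, and we are done. Otherwise put $C := \ster\N \setminus D$, a cointernal set.

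To apply Saturation to the countable family $\{B_n : n \in \N\} \cup \{C\}$ I verify the F.I.P.\ in the strengthened form of the remark after the Saturation Principle: for any $n_1, \dots, n_k \in \N$ the set $B_{n_1} \cap \cdots \cap B_{n_k} \cap C$ must be nonempty. Taking $N := \max_i n_i \in \N$, we have $N \ge n_i$ so $N \in B_{n_i}$ for each $i$; and because $a$ is not finite, $|a| \nleq N$, so $N \notin D$, i.e., $N \in C$. Saturation therefore yields $m \in \bigcap_n B_n \cap C$, which by construction is an infinitely large element of $\ster\N$ with $|a| \nleq m$. This completes the contrapositive.

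The main obstacle I expect is correctly translating the external hypothesis ``$a$ is not finite'' into the F.I.P.\ statement for the cointernal set $C$ paired with the internal sets $B_n$; once this is in place, Saturation does the work. A route that bypasses Saturation is the direct diagonal construction on a representative $a = [a_\eps]$: the condition $|a| \nleq n$ unwinds to the statement that $0$ is an accumulation point of $\{\eps : |a_\eps| > n\}$, so one can choose $\eps_n \downarrow 0$ strictly with $|a_{\eps_n}| > n$, then define $m_\eps$ to be the largest $n$ with $\eps \le \eps_n$ (and $0$ otherwise). One checks that $m := [m_\eps] \in \ster\N$ is infinitely large, while $m_{\eps_n} = n < |a_{\eps_n}|$ along the sequence $(\eps_n)$, witnessing $|a| \nleq m$.
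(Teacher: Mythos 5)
Your proof is correct and is essentially the paper's argument: the paper notes that ``$a$ not finite'' gives $(\forall n\in\N)\,(\exists m\in\ster\N)\,(m\ge n \,\&\, \abs{a}\nleq m)$ and then invokes its quantifier-switching corollary, whose proof consists of exactly the saturation argument you carry out by hand (the internal sets $B_n$ together with the single cointernal set $C$, using the remark after the Saturation Principle). The only differences are cosmetic: you inline the corollary instead of citing it, and you add a harmless case distinction for $D=\emptyset$ (which in fact cannot occur, since $[\lceil\abs{a_\eps}\rceil]\in\ster\N$ always dominates $\abs a$).
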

\begin{proof}
Suppose that $a$ is not finite. Then $(\forall n\in\N)$ $(\exists m\in\ster\N)$ $(m\ge n \,\& \abs{a}\nleq m)$. By quantifier switching, there exists $m\in\ster\N$ such that $\abs{a}\nleq m$ and $m\ge n$, for each $n\in\N$, contradicting the hypotheses.
\end{proof}

\begin{thm}[Spilling principles]
Let $A\subseteq\ster\N$ be internal.
\begin{enumerate}
\item (Overspill) If $A$ contains arbitrarily large finite elements (i.e., for each $n\in\N$, there exists $m\in A$ with $m\ge n$), then $A$ contains an infinitely large element.
\item (Underspill) If $A$ contains arbitrarily small infinitely large elements (i.e., for each infinitely large $\omega\in\ster\N$, there exists $a\in A$ with $a\le \omega$), then $A$ contains a finite element.
\item (Overspill) If $\N\subseteq A$, then there exists an infinitely large $\omega\in\ster\N$ such that $\{n\in\ster\N: n\le \omega\}\subseteq A$.
\item (Underspill) If $A$ contains all infinitely large elements of $\ster\N$, then $A\cap \N\ne\emptyset$.
\end{enumerate}
\end{thm}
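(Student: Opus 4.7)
The plan is to prove (1) and (2) directly from the principles already established, then bootstrap the stronger statements (3) and (4) from them using auxiliary internal sets produced by I.D.P.

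For (1), set $B_n := \{x\in A : x\ge n\}$ for each $n\in\N$. The defining formula $x\ge n$ is atomic and hence transferrable, so by I.D.P.\ each $B_n$ is internal; by hypothesis each is nonempty, and $B_{n+1}\subseteq B_n$, so the F.I.P.\ is automatic. The Saturation Principle then provides an $x\in\bigcap_{n\in\N} B_n$; such an $x$ satisfies $x\ge n$ for every $n\in\N$, i.e.\ is infinitely large. For (2), I would invoke the transferred well-ordering of $\N$: the sentence $(\forall B\in\Powerset(\N))(\exists m\in B)(\forall x\in B)(m\le x)$ is built from rules F3, F4 only, hence transferrable, and its transfer reads ``every internal nonempty subset of $\ster\N$ has a minimum''. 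Let $a := \min A$. If $a$ were infinitely large then so would be $a-1\in\ster\N$, and the hypothesis would produce $a'\in A$ with $a'\le a-1 < a$, contradicting the minimality of $a$; hence $a$ is finite.

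For (3), the strategy is to reduce to (1) by considering the ``threshold'' set
\[ B := \{m\in\ster\N : (\forall y\in\ster\N)(y\le m \implies y\in A)\}. \]
The bounded implication inside the $\forall y$ is handled via the F5' reformulation with the trivially satisfied side condition $(\exists y\in\ster\N)(y\le m)$, making the defining formula transferrable; by I.D.P.\ $B$ is internal as soon as it is nonempty. The crucial step is to verify $\N\subseteq B$, after which (1) applied to $B$ yields an infinitely large $\omega\in B$, which is exactly the desired conclusion. Part (4) then follows as a clean contrapositive of (3): if $A\cap\N=\emptyset$ then $\N\subseteq\ster\N\setminus A$, which is internal by I.D.P.\ (via the transferrable negated atomic formula $\neg(x\in A)$), and (3) yields an infinitely large $\omega\in\ster\N\setminus A$, contradicting the hypothesis that $A$ contains every infinitely large element of $\ster\N$.

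The main obstacle I expect is the verification $\N\subseteq B$ in (3). Given $n\in\N$ and $y=[y_\eps]\in\ster\N$ with $y\le n$, the components $y_\eps$ may take different values in $\{0,1,\dots,n\}$ as $\eps$ varies; I need to combine, for each $k\in\{0,\dots,n\}$, a threshold $\eta_k>0$ below which $k\in A_\eps$ (obtained from $\ster k\in A$), and argue that for $\eps\le\min(\eta_0,\dots,\eta_n)$ one has $y_\eps\in A_\eps$, so that $y\in A$. This uniformity-over-finitely-many-witnesses step is routine but is the only place where some $\eps$-level computation is needed; once it is dispatched, the bootstrap structure of the theorem cascades smoothly.
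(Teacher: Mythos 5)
Parts (1) and (3) of your proposal are correct and essentially the paper's argument (your direct saturation argument in (1) is exactly what the quantifier-switching corollary packages, and your $\eps$-level verification of $\N\subseteq B$ in (3) replaces the paper's transfer on $(\forall X\in\Powerset(\N))[(1\in X\ \&\ \dots\ \&\ n_0\in X)\implies\dots]$, which is fine). Part (2), however, has a gap at the last step: you conclude that $a=\min A$ is finite from the fact that it is not infinitely large. In this model, where nets are identified only for small $\eps$, the dichotomy ``finite or infinitely large'' fails in $\ster\N$: for instance $[m_\eps]$ with $m_\eps=1$ for $\eps$ along a sequence tending to $0$ and $m_\eps=\lfloor 1/\eps\rfloor$ otherwise is neither. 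This is precisely why the paper proves Lemma \ref{lemma_finite} (itself via quantifier switching) before the spilling principles: from the underspill hypothesis one gets $a\le a'\le\omega$ for \emph{every} infinitely large $\omega$, and that lemma --- not a trichotomy --- then yields finiteness. Your argument with $\omega=a-1$ only rules out the case that $a$ is infinitely large and leaves the ``neither'' case open.

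Part (4) is broken for a structural reason. You apply the I.D.P.\ to $\ster\N\setminus A$ via the formula $\neg(x\in A)$, but negation is rule F6, which is deliberately excluded from the transferrable formulas (Propositions \ref{prop_up_down_transfer} and \ref{prop_extended_transfer} cover F1--F4 and F5$'$ only), and indeed the complement of an internal set is in general \emph{not} internal in this setting: $\neg(c_\eps\in A_\eps \text{ for small }\eps)$ only says $c_\eps\notin A_\eps$ for a sequence of $\eps$'s tending to $0$, not for all small $\eps$. This is exactly why the Saturation Principle is stated for sets $A_n$ such that $A_n$ \emph{or} $X\setminus A_n$ is internal, and why the closing section lists closure of internal sets under $\setminus$ as a feature available only after passing to an ultrafilter. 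So (3) cannot be applied to $\ster\N\setminus A$, and the contrapositive route collapses. The paper instead runs the dual of your part-(3) construction: $B=\{n\in\ster\N: (\forall m\in\ster\N)(m\ge n\implies m\in A)\}$ is internal by I.D.P., contains every infinitely large element by hypothesis (so the hypothesis of part (2) holds for $B$), hence contains a finite element $n\le N$ with $N\in\N$, and then $N\in A$ by definition of $B$. Repairing your part (4) amounts to reproducing exactly that argument.
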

\begin{proof}
1. As $(\forall n\in\N)$ $(\exists m\in A)$ $(m\ge n)$, there exists an infinitely large $m\in A$ by quantifier switching.

2. By transfer on the sentence
\[(\forall X\in\Powerset(\N)) (\exists m\in X) (\forall n\in X) (n\ge m),\]
every internal subset of $\ster\N$ has a smallest element. Let $n_{min}$ be the smallest element of $A$. Then $n_{min}\le \omega$, for each infinitely large $\omega\in\ster\N$. By lemma \ref{lemma_finite}, $n_{min}$ is finite.

3. First, let $n_0\in\N$. By transfer on the sentence
\[(\forall X\in \Powerset(\N)) [(1\in X \ \&\ \dots \ \&\ n_0\in X) \implies (\forall m\in\N) (m\le n_0\implies m\in X)]\]
(side conditions are trivially fulfilled), any internal subset of $\ster\N$ that contains $\N$ also contains $\{m\in\ster\N: m\le n_0\}$, for any $n_0\in\N$. Then
\[
B =\{n\in\ster\N: (\forall m\in\ster\N) (m\le n \implies m\in A)\}.
\]
is internal by I.D.P.\ (since the side condition is trivially fulfilled and $B\ne\emptyset$) and contains $\N$. By part 1, $B$ contains an infinitely large $\omega$. Hence $\{n\in\ster\N: n\le\omega\}\subseteq A$.

4. Let
\[
B =\{n\in\ster\N: (\forall m\in\ster\N) (m\ge n \implies m\in A)\}.
\]
By I.D.P., $B$ is internal (since the side condition is trivially fulfilled and $B\ne\emptyset$). By part~2, $B$ contains a finite element, i.e., there exists $n\in B$ and $N\in\N$ such that $n\le N$. By definition of $B$, $N\in A$.
\end{proof}

\begin{cor}
$\N$ and $\Fin(\ster\R)$ are external subsets of $\ster\R$.
\end{cor}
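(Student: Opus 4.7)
The plan is to derive both externality claims by contradiction from Overspill (part 1 of the Spilling Principles). I would first dispose of $\N$ and then reduce $\Fin(\ster\R)$ to the same kind of contradiction.

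For $\N$: suppose, for contradiction, that $\N$ is an internal subset of $\ster\R$. Since $\ster\N$ is internal (it is $\ster$ applied to a nonempty set), and the intersection of two internal sets is internal or empty (Exercise (c) after the I.D.P.), the nonempty set $\N = \N\cap\ster\N$ would be internal as a subset of $\ster\N$. It trivially contains arbitrarily large finite elements, so Overspill would produce an infinitely large element of $\N$, which is absurd.

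For $\Fin(\ster\R)$: again by contradiction, assume it is internal. The key move is to intersect with $\ster\N$ in order to apply the spilling principles, which are stated for subsets of $\ster\N$. The set $A := \Fin(\ster\R)\cap\ster\N$ is nonempty (e.g.\ $1\in A$) and is the intersection of two internal sets, hence internal. Since $\N\subseteq A$, the set $A$ contains arbitrarily large finite elements, so Overspill yields an infinitely large $\omega\in A$. But $A\subseteq \Fin(\ster\R)$ consists only of finite elements, contradicting $\omega$ infinitely large.

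I do not anticipate any real obstacle; the only subtlety worth flagging is that both arguments require the preliminary step of intersecting with $\ster\N$ before invoking the spilling principles, and that one must appeal to Exercise (c) to confirm the resulting intersections are indeed internal (rather than merely subsets of an internal set).
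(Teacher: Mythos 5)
Your proof is correct and is essentially the argument the paper intends (it leaves the corollary unproved as an immediate consequence of Overspill): assume internality, pass to an internal subset of $\ster\N$ containing $\N$, and let Overspill produce an infinitely large element where none can exist. The reduction via intersection with $\ster\N$ and the appeal to the exercise that a nonempty intersection of internal sets is internal are exactly the right bookkeeping steps.
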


\begin{cor}[Rigidity]
Let $f$, $g$ be internal maps $\ster\R\to\ster\R$. If $f(x)=g(x)$ for each $x\approx 0$, then there exists $r\in\R^+$ such that $f(x)=g(x)$ for $x\in\ster\R$ with $\abs x\le r$.
\end{cor}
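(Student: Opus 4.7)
My approach is to apply underspill (part 4 of the Spilling Principles) to a set of integer scales at which $f$ and $g$ agree on the ball of radius $1/n$ around $0$. Concretely, I would introduce
\[
A := \{n \in \ster\N : n \ge 1 \,\&\, (\forall x \in \ster\R)(\abs{x} \le 1/n \implies f(x) = g(x))\} \subseteq \ster\N
\]
and show that $A$ contains every infinitely large $m \in \ster\N$. Underspill then yields $A \cap \N \ne \emptyset$, and for any $n_0 \in A \cap \N$ the choice $r := 1/n_0 \in \R^+$ is exactly what the conclusion requires.

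The easy half is that every infinitely large $m \in \ster\N$ lies in $A$: we have $m \ge 1$, and if $\abs{x} \le 1/m$ then $\abs{x} \le 1/n$ for every $n \in \N$, so $x \approx 0$ and hence $f(x) = g(x)$ by hypothesis. In particular $A$ is nonempty, since infinitely large elements of $\ster\N$ exist (e.g.\ by overspill applied to $\ster\N$ itself).

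The step that demands care is proving that $A$ is internal, so that underspill is applicable. Here I would invoke the IDP (Theorem \ref{thm_IDP}). To bring the defining formula into the transferrable fragment of Proposition \ref{prop_extended_transfer}, I would replace the universally-quantified implication $(\forall x \in \ster\R)(\abs{x} \le 1/n \implies f(x) = g(x))$ by its F5' form, whose side condition $(\exists x \in \ster\R)(\abs{x} \le 1/n)$ is trivially fulfilled by $x = 0$, making the replacement logically equivalent to the original. The resulting formula is transferrable, and since $f$, $g$, $\ster\R$ are internal and $A$ is nonempty, IDP yields internality of $A$. Once $A$ is known to be an internal subset of $\ster\N$ containing every infinitely large element, underspill delivers the conclusion at once.
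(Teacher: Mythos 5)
Your proposal is correct and follows exactly the paper's route: the paper's proof is the one-line ``By underspill on $\{n\in\ster\N: (\forall x\in\ster\R) (\abs x\le 1/n \implies f(x) = g(x))\}$'', and you have simply filled in the details (internality via I.D.P.\ with the trivially fulfilled side condition, membership of all infinitely large elements, underspill part 4). Nothing is missing.
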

\begin{proof}
By underspill on $\{n\in\ster\N: (\forall x\in\ster\R) (\abs x\le 1/n \implies f(x) = g(x))\}$.
\end{proof}

\section{Calculus on $\ster\R$: examples}
By transfer, many concepts defined for nongeneralized objects have a counterpart for internal generalized objects. As illustrated below, we can often characterize the cor\-res\-pon\-ding concept by a property that can also be defined for external (=non-internal) generalized objects. This yields an intrinsic development of the theory, without reference to the structure of the internal objects as nets.\\
Stated otherwise: generalized objects are judged by their properties (in the formal language), which are often similar to those of nongeneralized objects (by transfer), rather than viewed as nets of nongeneralized objects that are `wildly moving around'.
\vskip 8pt plus1pt

If $\mathcal B$ is the set of all non-empty bounded subsets of $\R$, then
\[\ster{\mathcal B}=\{A\in\ster\Powerset(\R): (\exists R\in\ster\R) (\forall x\in A) (\abs x\le R)\}\]
by corollary \ref{cor_star_of_a_set}.
\begin{df}
A subset $A$ of $\ster\R$ is \defstyle{$*$-bounded} if $(\exists R\in\ster\R) (\forall x\in A) (\abs x\le R)$.
\end{df}
Hence $\ster{\mathcal B}$ is the set of all internal $*$-bounded subsets of $\ster\R$. A nonempty subset $A\subseteq\R$ is bounded iff $\ster A$ is $*$-bounded.
\vskip 8pt plus1pt

If $\mathcal F$ is the set of all non-empty closed subsets of $\R$, then
\[\ster{\mathcal F}=\{A\in\ster\Powerset(\R): (\forall x\in\ster\R) [(\forall r\in\ster(\R^+)) (\exists a\in A) (\abs{x-a}\le r)\implies x\in A]\}
\]
by corollary \ref{cor_star_of_a_set} (since the side-condition $(\exists x\in\ster\R)(\forall r\in\ster(\R^+)) (\exists a\in A) (\abs{x-a}\le r)$ is always fulfilled, and thus becomes redundant).

\begin{df}
A subset $A$ of $\ster\R$ is \defstyle{$*$-closed} if every $x\in\ster\R$ with the property that $(\forall r\in\ster(\R^+)) (\exists a\in A) (\abs{x-a}\le r)$ belongs to $A$.
\end{df}
\vskip 8pt plus1pt

If $\mathcal K$ is the set of all non-empty compact subsets of $\R$, then $\mathcal K = \mathcal B\cap \mathcal F$, so $\ster{\mathcal K}= \ster{\mathcal B}\cap \ster{\mathcal F}$.

\begin{df}
A subset $A$ of $\ster\R$ is \defstyle{$*$-compact} if $A$ is $*$-bounded and $*$-closed.
\end{df}

The map $\max$: $\mathcal K\to\R$ is well-defined. Hence $\stermathop\max$: $\ster{\mathcal K}\to \ster\R$ is well-defined. Since
\[(\forall K\in\mathcal K)(\forall x\in \R)(x=\max(K) \iff x\in K \ \&\ (\forall y\in K) (x\ge y)),\]
we see that $\stermathop\max(K)$ is the maximum of $K$ for the usual order on $\ster\R$ (by transfer).
\vskip 8pt plus1pt

Let $A\subseteq\R$. Let $\Cnt(A)$ be the set of all continuous maps $A\to\R$. Then
\begin{multline*}
\ster\Cnt(A)=\{f\in\ster{(\R^A)}:\\
(\forall x\in \ster A) (\forall r\in\ster(\R^+)) (\exists \delta\in\ster(\R^+)) (\forall y\in\ster A) (\abs{x-y}\le\delta \implies \abs{f(x)-f(y)}\le r)\}
\end{multline*}
by corollary \ref{cor_star_of_a_set}.

\begin{df}
Let $\emptyset\ne A\subseteq \ster\R$. A map $f$: $A\to\ster\R$ is called \defstyle{$*$-continuous} if $(\forall x\in A) (\forall r\in\ster(\R^+)) (\exists \delta\in\ster(\R^+)) (\forall y\in A) (\abs{x-y}\le\delta \implies \abs{f(x)-f(y)}\le r)$.
\end{df}

\begin{prop}
Let $K\subseteq \ster\R$ be internal and $*$-compact. Let $f$ be an internal $*$-continuous map $K\to\ster\R$. Then $f(K)$ is $*$-compact. In particular, $f$ reaches a maximum on $K$.
\end{prop}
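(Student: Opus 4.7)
The plan is to reduce the claim to classical compactness via transfer. Write $K = [K_\eps]$ and $f = [f_\eps]$. Since $K$ is internal and $*$-compact, $K \in \ster{\mathcal B} \cap \ster{\mathcal F} = \ster{\mathcal K}$, and the characterization of $\ster{\mathcal K}$ via corollary \ref{cor_star_of_a_set} gives that $K_\eps$ is a nonempty compact subset of $\R$ for small $\eps$. The $*$-continuity of $f$ on $K$ is expressible by a transferrable formula (the implication $\abs{x-y} \le \delta \implies \abs{f(x)-f(y)} \le r$ sits inside a $\forall y \in K$ whose side condition is trivially satisfied by $y = x$, so rule F5' applies), hence by proposition \ref{prop_extended_transfer}, $f_\eps$ is continuous on $K_\eps$ for small $\eps$.

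By classical analysis, $f_\eps(K_\eps)$ is then a nonempty compact subset of $\R$ for small $\eps$, so $L := [f_\eps(K_\eps)]$ is a well-defined element of $\ster{\mathcal K}$, i.e., an internal $*$-compact subset of $\ster\R$. I would then verify $f(K) = L$. The inclusion $f(K) \subseteq L$ is immediate from $f([x_\eps]) = [f_\eps(x_\eps)]$: for $[x_\eps] \in K$ one has $x_\eps \in K_\eps$ for small $\eps$, hence $f_\eps(x_\eps) \in f_\eps(K_\eps)$ for small $\eps$. For the converse, given $y = [y_\eps] \in L$, we have $y_\eps \in f_\eps(K_\eps)$ for small $\eps$, and using nonemptiness of $K_\eps$ I would construct a net $(x_\eps)_\eps$ with $x_\eps \in K_\eps$ for small $\eps$ and $f_\eps(x_\eps) = y_\eps$ for small $\eps$, giving $[x_\eps] \in K$ with $f([x_\eps]) = y$.

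Finally, since $f(K) \in \ster{\mathcal K}$, the observation preceding the statement---that $\stermathop\max: \ster{\mathcal K} \to \ster\R$ assigns to each internal $*$-compact set its usual maximum---applies to $f(K)$ and yields an element $y_0 \in f(K)$ with $y_0 \ge z$ for all $z \in f(K)$. Writing $y_0 = f(x_0)$ with $x_0 \in K$ produces the maximum of $f$ on $K$. The main obstacle will be the careful identification $f(K) = [f_\eps(K_\eps)]$: the $\subseteq$ direction is routine, but the $\supseteq$ direction requires the selection of a preimage net $(x_\eps)_\eps$, and here nonemptiness of $K_\eps$ (for small $\eps$) is essential to define $x_\eps$ on the remaining indices so that $[x_\eps]$ is a genuine element of $K$.
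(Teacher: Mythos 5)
Your proof is correct, but it takes a different route from the one in the notes. The notes never descend to representatives: they transfer the single classical sentence $(\forall K\in \mathcal K)\,(\forall f\in\Cnt(K))\,(f(K)\in\mathcal K)$ (viewing $\Cnt$ as a map $\Powerset(\R)\to\Powerset(\mathcal F(\R,\R))$), obtaining $(\forall K\in \ster{\mathcal K})\,(\forall f\in(\ster\Cnt)(K))\,(f(K)\in\ster{\mathcal K})$, and then identify $(\ster\Cnt)(A)$ with the internal $*$-continuous functions on $A$ by a second transfer; combined with the already established identification of $\ster{\mathcal K}$ with the internal $*$-compact sets, this is the whole proof. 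You instead unfold the same content $\eps$-wise: you use the membership characterization of $\ster{\mathcal K}$ and the transferability of the continuity formula (correctly handled via F5', with the trivially satisfied side condition $y=x$) to reduce to nets of compact sets and continuous maps, invoke the classical theorem for each small $\eps$, and then verify $f(K)=[f_\eps(K_\eps)]$ by hand. What your version buys is that the identity $f(K)=[f_\eps(K_\eps)]$ --- which the transfer argument quietly presupposes when it treats $f(K)$ as a term whose star-transform is the set-theoretic image of the internal map --- is made explicit, including the preimage selection in the $\supseteq$ direction (where, incidentally, nonemptiness of $K_\eps$ is not even needed off the small-$\eps$ range, since membership in $[K_\eps]$ only constrains the net for small $\eps$). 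What the paper's version buys is brevity and an entirely $\eps$-free argument, which is precisely the methodological point these notes are advertising; your argument is the kind of ``working directly on the nets'' that the principles are meant to replace, though here it is carried out correctly and the final step via $\stermathop\max$ on $\ster{\mathcal K}$ matches the remark preceding the proposition.
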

\begin{proof}
We would like to apply transfer to $(\forall K\in \mathcal K) (\forall f\in\Cnt(K)) (f(K)\in\mathcal K)$. Then we have to consider $\Cnt$ as a map $\Powerset(\R)\to \Powerset(\mathcal F(\R,\R))$: $A\mapsto \Cnt(A)$, where we denote by $\mathcal F(\R, \R)$ the set of all (partially defined) functions $\R\to\R$. Then we obtain the transferred property $(\forall K\in \ster{\mathcal K}) (\forall f\in(\ster\Cnt)(K)) (f(K)\in\ster{\mathcal K})$. By transfer on
\begin{multline*}
(\forall X\in\Powerset(\R)) \big(\forall f\in\mathcal F(\R,\R)) (f\in \Cnt(X) \iff\\
\begin{cases}
(\forall x\in X) (\exists y\in \R) (f(x) = y)\\
(\forall x\in X) (\forall r\in \R^+) (\exists \delta\in\R^+) (\forall y\in X) (\abs{x-y}\le\delta \implies \abs{f(x)-f(y)}\le r).
\end{cases}
\end{multline*}
we see that for internal $A\subseteq\ster\R$, $(\ster\Cnt)(A)$ is the set of all internal functions that are defined and $*$-continuous on $A$.
\end{proof}

\begin{df}
Let $\Omega\subseteq \R^d$ be open. Then we denote $\ster\Omega_c:= \bigcup_{K\csub\Omega}\ster K$.
\end{df}

\begin{prop}[Infinitesimal characterization of continuity]\label{prop_cnt_char}
Let $f$: $\Omega\to\C$. The following are equivalent:
\begin{enumerate}
\item $f$ is continuous
\item $(\forall x,y\in\ster\Omega_c)$ $(x\approx y\implies f(x)\approx f(y))$.
\end{enumerate}
\end{prop}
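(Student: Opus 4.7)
I would prove the two implications separately. Direction (1)$\Rightarrow$(2) follows from transfer of uniform continuity on a slight enlargement of a compact neighbourhood of $x$, while direction (2)$\Rightarrow$(1) is most efficiently handled by contrapositive, explicitly building a net of near-witnesses to discontinuity.

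For (1)$\Rightarrow$(2), fix $x,y\in\ster\Omega_c$ with $x\approx y$. By definition of $\ster\Omega_c$, there is some $K\csub\Omega$ with $x\in \ster K$. Since $\Omega$ is open and $K$ is compact, I can choose a larger $K'\csub\Omega$ satisfying $K+\overline{B(0,r)}\subseteq K'$ for some standard $r>0$. Transferring the nongeneralized sentence $(\forall z\in\R^d)(\forall a\in K)(\abs{z-a}\le r\implies z\in K')$ yields $y\in \ster{K'}$, since $\abs{y-x}$ is infinitesimal and hence $\le r$. Now $f$ is uniformly continuous on the compact set $K'$: for every standard $\eps>0$ there exists a standard $\delta>0$ such that any $a,b\in K'$ with $\abs{a-b}\le\delta$ satisfy $\abs{f(a)-f(b)}\le\eps$. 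Transferring this, with $\eps,\delta$ regarded as constants, extends the implication to any $a,b\in\ster{K'}$; applied to $a:=x$, $b:=y$, which satisfy $\abs{x-y}\le\delta$, it gives $\abs{f(x)-f(y)}\le\eps$. Since $\eps\in\R^+$ was arbitrary, $f(x)\approx f(y)$.

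For (2)$\Rightarrow$(1), assume $f$ is discontinuous at some $x_0\in\Omega$. Then there exists $\eps_0>0$ such that, for every $n\in\N$, some $y_n$ in a fixed closed ball $K_0\csub\Omega$ centred at $x_0$ satisfies $\abs{y_n-x_0}\le 1/n$ and $\abs{f(y_n)-f(x_0)}>\eps_0$. Define $z_\eps:=y_{\lceil 1/\eps\rceil}$ for $\eps\in(0,1)$. Then $[z_\eps]\in\ster{K_0}\subseteq\ster\Omega_c$, and $\abs{z_\eps-x_0}\le 1/\lceil 1/\eps\rceil\le\eps$ shows $[z_\eps]\approx x_0$. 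On the other hand $\abs{f(z_\eps)-f(x_0)}>\eps_0$ for every $\eps$, so $f([z_\eps])\not\approx f(x_0)$, contradicting~(2).

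The genuinely delicate step is the thickening $K\subseteq \mathrm{int}(K')$ in the first direction: without it, the near-standard point $y$ need not lie in $\ster K$, and transferred uniform continuity has nothing to say about the pair $(x,y)$. Everything else is routine transfer, infinitesimal arithmetic, and unfolding the definition of $\ster\Omega_c$.
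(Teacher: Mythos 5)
Your proof is correct, but it diverges from the paper's in both directions, most significantly in the second one. For $(1)\Rightarrow(2)$, the paper simply uses that $y\in\ster\Omega_c$ is part of the hypothesis, so $y\in\ster L$ for some $L\csub\Omega$, and transfers uniform continuity of $f$ on $K\cup L$; your thickening $K\subseteq\mathrm{int}(K')$, which you single out as the delicate step, is thereby avoided entirely (though your variant is sound, and has the small merit of showing that $y\in\ster\Omega_c$ need not be assumed once $x\in\ster\Omega_c$ and $x\approx y$). For $(2)\Rightarrow(1)$, the paper argues $\eps$-free: for $x\in\Omega$ and $r\in\R^+$ it forms the set $A=\{n\in\ster\N: (\forall y\in\ster\Omega)\,(\abs{x-y}\le 1/n\implies\abs{f(x)-f(y)}\le r)\}$, notes that $A$ is internal by the I.D.P.\ and contains all infinitely large $n$ by hypothesis, and concludes by underspill that $A$ contains a standard $n$. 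You instead pass to the contrapositive and explicitly build a net $z_\eps:=y_{\lceil 1/\eps\rceil}$ of witnesses to discontinuity; this is correct (the checks that $[z_\eps]\in\ster{K_0}\subseteq\ster\Omega_c$, $[z_\eps]\approx x_0$, and $f([z_\eps])\not\approx f(x_0)$ all go through), but it is exactly the kind of hands-on manipulation of representatives that the paper's machinery of internal sets and spilling principles is designed to replace. What the paper's route buys is a proof that never mentions the index $\eps$ and generalizes mechanically (compare propositions \ref{rho-cnt-sterR} and \ref{prop_pointwise_regularity}, which reuse the same underspill or quantifier-switching pattern); what your route buys is self-containedness, at the cost of reverting to net-level bookkeeping.
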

\begin{proof}
$\Rightarrow$: let $x\in\ster K$, $K\csub\Omega$ and $y\in \ster L$, $L\csub\Omega$ with $x\approx y$. Let $r\in\R^+$. As $f$ is uniformly continuous on $K\cup L\csub \Omega$, there exists $\delta\in\R^+$ such that $(\forall x', y'\in K\cup L)$ $(\abs{x'-y'}\le\delta \implies \abs{f(x')-f(y')}\le r)$. By transfer, $(\forall x',y'\in\ster (K\cup L))$ $(\abs{x'-y'}\le\delta \implies \abs{f(x')-f(y')}\le r)$. As $\ster K\cup\ster L\subseteq \ster (K\cup L)$, $\abs{f(x)-f(y)}\le r$. Since $r\in\R^+$ arbitrary, $f(x)\approx f(y)$.\\
$\Leftarrow$: let $x\in\Omega$ and $r\in\R^+$. Let
\[A=\{n\in\ster\N: (\forall y\in\ster\Omega) (\abs{x-y}\le 1/n\implies \abs{f(x)-f(y)}\le r)\}.\]
If $n\in\ster\N$ is infinitely large and $\abs{x-y}\le 1/n$, then $y\in \ster\Omega_c$ and $x\approx y$, so $n\in A$ by assumption. Further, $A$ is internal by I.D.P.\ (since the side condition for the implication is always fulfilled). By underspill, $A$ contains some $n\in\N$.
\end{proof}
\vskip 8pt plus1pt minus2pt

Since $\partial_j$: $\Cnt[1](\Omega)\to\Cnt(\Omega)$, we have for $f\in\ster{\Cnt[1]}(\Omega)$ that $(\ster\partial_j) f\in\ster\Cnt(\Omega)$ and $(\forall x\in\ster\Omega) (\forall r\in\ster(\R^+))$ $(\exists \delta\in\ster(\R^+))$ $(\forall h\in\ster\R)$ $(0\, \ster\!\!<\abs h\le \delta \implies \abs[\big]{\frac{f(x+he_j)-f(x)}{h} - (\ster\partial_j) f(x)}\le r)$ by transfer. (The inverse of $h$ is defined if $\abs h \, \ster \!\!> 0$.) We can again define the concept of a $*$-partial derivative for any map $f$: $\ster\Omega\to\ster\C$. Also for the (differential) algebraic operations on functions, we will drop stars and simply write $\partial_j f$ instead of $(\ster\partial_j) f$.

\section{Colombeau generalized objects: examples}
\subsection*{$\GenR$ and the $\caninf$-topology on $\ster\R$}
\begin{df}
We denote $\caninf:=[\eps]\in \ster\R$.
We call \defstyle{$\caninf$-topology} on $\ster\R^d$ the translation invariant topology with $\{B(0,\caninf^m): m\in\N\}$ as a local base of neighbourhoods of $0$ (with $B(a,r):=\{x\in\ster\R^d: \abs {x-a} < r\}$, for $a\in\ster\R^d$ and $r\in\ster(\R^+)$).\\
We call $x\in\ster\R^d$ \defstyle{negligible} if $\abs x\le\caninf^m$, for each $m\in\N$ (i.e., if $x$ belongs to the intersection of all $\caninf$-neighbourhoods of $0$). For $x,y\in\ster\R^d$, we write $x\approxeq y$ if $x-y$ is negligible.
We call $x\in\ster\R^d$ \defstyle{moderate} if there exists $N\in\N$ such that $\abs x\le\caninf^{-N}$. We write $\Mod_{\R^d}$ for the set of moderate elements and $\Null_{\R^d}$ for the set of negligible elements.
\end{df}

We have a similar characterization for $\caninf$-continuity (=continuity in the $\caninf$-topology) as in proposition \ref{prop_cnt_char}:

\begin{prop}\label{rho-cnt-sterR}
Let $f$: $\ster\Omega\to\ster\C$ be internal and $a\in\ster\Omega$. The following are equivalent:
\begin{enumerate}
\item $(\forall m\in\N)$ $(\exists n\in\N)$ $(\forall x\in\ster\Omega)$ $(\abs{x-a}\le \rho^n\implies \abs{f(x) - f(a)}\le \rho^m)$
\item $(\forall x\in \ster\Omega)$ $(x\approxeq a\implies f(x)\approxeq f(a))$.
\end{enumerate}
\end{prop}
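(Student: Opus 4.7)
The proof follows the template of proposition \ref{prop_cnt_char}: the direction $(1)\Rightarrow(2)$ is immediate from the definition of $\approxeq$, while $(2)\Rightarrow(1)$ reduces to an underspill argument on a suitably defined internal subset of $\ster\N$.

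For $(1)\Rightarrow(2)$, I fix $x \in \ster\Omega$ with $x \approxeq a$ and let $m \in \N$. By (1) there is $n \in \N$ such that $|x-a| \le \rho^n$ forces $|f(x)-f(a)| \le \rho^m$. Since $x - a$ is negligible, $|x-a| \le \rho^n$ holds automatically, and the implication yields $|f(x)-f(a)| \le \rho^m$. As $m$ was arbitrary, $f(x) \approxeq f(a)$.

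For $(2)\Rightarrow(1)$, I fix $m \in \N$ and set
\[A := \{n \in \ster\N : (\forall x \in \ster\Omega)(|x-a| \le \rho^n \implies |f(x)-f(a)| \le \rho^m)\}.\]
The plan is then two steps. First, $A$ contains every infinitely large element of $\ster\N$: by transfer of the elementary inequality $(\forall r \in (0,1))(\forall p,q \in \N)(p \ge q \implies r^p \le r^q)$, applied with $r = \rho \in \ster(0,1)$, any infinitely large $n$ dominates every $k \in \N$, hence $\rho^n \le \rho^k$; so a bound $|x-a| \le \rho^n$ implies $x \approxeq a$, and then (2) delivers $|f(x)-f(a)| \le \rho^m$. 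Second, $A$ is internal by I.D.P., since its defining formula is built from atomic formulas via rule F5', whose side condition $(\exists x \in \ster\Omega)(|x-a| \le \rho^n)$ is met by $x = a$, and $A$ is non-empty by the first step. Underspill (part 4 of the spilling principles) then produces $n \in A \cap \N$, which is exactly the witness required by (1).

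The only genuinely non-routine step is the containment of all infinitely large integers in $A$; it hinges on transferring the monotonicity of $r \mapsto r^p$ to $\ster\R$, so as to convert a bound by $\rho^n$ for infinitely large $n$ into negligibility of $x - a$. The rest is bookkeeping: verifying that rule F5' applies so I.D.P.\ gives internality of $A$, and then invoking underspill.
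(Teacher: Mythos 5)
Your proof is correct and follows essentially the same route as the paper: the easy direction via the definition of $\approxeq$, and the converse by applying I.D.P.\ and underspill to the same internal set $A$. The only difference is that you spell out why $A$ contains all infinitely large $n$ (via $\rho^n\le\rho^k$ for $k\in\N$), a step the paper leaves implicit.
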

\begin{proof}
$\Rightarrow$: Let $x\in\ster\Omega$ with $x\approxeq a$. Let $m\in\N$. By assumption, $\abs{f(x)-f(a)}\le\rho^m$. As $m\in\N$ is arbitrary, $f(x)\approxeq f(a)$.\\
$\Leftarrow$: let $m\in\N$. Consider
\[
A:=\{n\in\ster\N: (\forall x\in\ster\Omega) (\abs{x-a}\le\rho^n\implies \abs{f(x)-f(a)}\le\rho^m)\}.
\]
By assumption, $A$ contains all infinitely large $n\in\ster\N$. By I.D.P., $A$ is internal (as the side condition for the implication is always fulfilled). By underspill, $A\cap\N\ne\emptyset$.
\end{proof}

Since we are interested in nonlinear operations for generalized functions, we notice that, although the product is not $\caninf$-continuous on the whole space, we have:
\begin{prop}\label{product-rho-cnt}
The product is $\caninf$-continuous on moderate elements.
\end{prop}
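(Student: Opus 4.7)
The plan is to combine a direct algebraic calculation with the I.D.P.-plus-underspill template of Proposition \ref{rho-cnt-sterR}. At a moderate point $(a,b)$, $\caninf$-continuity of the product amounts, in the quantifier form of Proposition \ref{rho-cnt-sterR}(i), to
\[(\forall m \in \N)(\exists n \in \N)(\forall x, y \in \ster\R)\bigl(\abs{x-a} \le \rho^n \,\&\, \abs{y-b} \le \rho^n \implies \abs{xy - ab} \le \rho^m\bigr).\]

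First I would establish the algebraic implication: if $a, b$ are moderate and $x \approxeq a$, $y \approxeq b$, then $xy \approxeq ab$. The key point is that moderateness is inherited, since $\abs{x} \le \abs{a} + \abs{x-a}$ is bounded by a moderate plus a negligible, hence moderate; likewise for $y$. Picking $N \in \N$ with $\abs{a}, \abs{y} \le \rho^{-N}$, the identity $xy - ab = (x-a) y + a (y-b)$ gives $\abs{xy-ab} \le \rho^{-N}(\abs{x-a} + \abs{y-b})$, which is negligible.

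Next I would lift this to a uniform $\N$-valued modulus via underspill. Fix moderate $(a,b)$ and $m \in \N$ and consider
\[A := \bigl\{n \in \ster\N : (\forall x, y \in \ster\R)\bigl(\abs{x-a} \le \rho^n \,\&\, \abs{y-b} \le \rho^n \implies \abs{xy-ab} \le \rho^m\bigr)\bigr\}.\]
$A$ is internal by I.D.P.\ (the side condition for the implication is trivially fulfilled). For infinitely large $n$, the inequality $\abs{x-a} \le \rho^n$ forces $x \approxeq a$ (since $\rho^n \le \rho^k$ for every $k \in \N$), and symmetrically $y \approxeq b$; the algebraic step then yields $\abs{xy - ab} \le \rho^m$, so $n \in A$. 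Underspill produces some $n \in A \cap \N$, which is the required modulus of continuity.

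The main obstacle is the algebraic step, and specifically that the moderateness of $(a,b)$ is indispensable: the factor $y$ multiplying $(x-a)$ must be controlled by a fixed power of $\rho$, which is exactly what moderateness supplies. Without this hypothesis the product is not $\caninf$-continuous (e.g.\ at $(\rho^{-\omega}, 0)$ with $\omega$ infinitely large), which is precisely why the proposition restricts to moderate elements. Once the algebraic estimate is secured, the I.D.P./underspill half directly mirrors the proof of Proposition \ref{rho-cnt-sterR}.
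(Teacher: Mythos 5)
Your proof is correct and takes essentially the same route as the paper's: the paper likewise reduces $\caninf$-continuity of the internal map $\ster\cdot$ at a moderate point to the infinitesimal characterization of Proposition \ref{rho-cnt-sterR} (whose I.D.P./underspill argument you merely inline) and then verifies $xy\approxeq ab$ by the same bilinearity-plus-moderateness estimate, written there as $xy\approxeq xb\approxeq ab$ rather than via your decomposition $xy-ab=(x-a)y+a(y-b)$.
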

\begin{proof}
Since the product $\ster\R^2\to \ster\R$ is internal (it equals $\ster\cdot$, where $\cdot$: $\R^2\to\R$), continuity at $(a,b)\in\ster\R^2$ means that
\[(\forall x,y\in\ster\R) \Big(\left.\begin{matrix}x\approxeq a\\y\approxeq b\end{matrix}\right\}\implies xy\approxeq ab\Big).\]
If $a,b\in \Mod_\R$, then $(x-a)b\in\Null_\R$, so $xb\approxeq ab$. Similarly, $xy\approxeq xb \approxeq ab$.
\end{proof}

Given a non-Hausdorff translation-invariant topology, one obtains a Hausdorff topological space by dividing out the intersection of all neighbourhoods of $0$. This motivates the following definition:
\begin{df}
The ring of \defstyle{Colombeau generalized (real) numbers} is
\[\GenR:= \Mod_\R/\Null_\R.\]
The \defstyle{sharp topology} on $\GenR$ is the Hausdorff (even metrizable) topology induced by the $\caninf$-topology on $\ster\R$. By proposition \ref{product-rho-cnt}, the product is well-defined and continuous on $\GenR$. In fact, $\GenR$ is a topological ring.
\end{df}

\begin{rem}
This definition coincides (up to an isomorphism in a strong sense) with the classical definition
\begin{multline*}
\GenR:= \{(x_\eps)_\eps\in \R^{(0,1)}: (\exists N\in\N)(\abs{x_\eps}\le \eps^{-N} \text{ for small }\eps)\}\\
/\{(x_\eps)_\eps\in \R^{(0,1)}: (\forall m\in\N)(\abs{x_\eps}\le \eps^{m} \text{ for small }\eps)\}
\end{multline*}
\end{rem}
since the only difference with the classical definition is that we have done the identification up to neglibility in two steps (in the first step only identifying up to small $\eps$).
\smallskip

Also in $\GenR^d$, internal sets can be defined:
\begin{df}
Let $\emptyset\ne A_\eps\subseteq \ster\R^d$ for each $\eps$. We denote the equivalence class of $(x_\eps)_\eps$ in $\GenR^d$ again by $[x_\eps]$. Then
\[\{[x_\eps]\in\GenR^d: x_\eps \in A_\eps \text{ for small }\eps\}\]
is the internal subset of $\GenR^d$ with representative $(A_\eps)_\eps$. Equivalently, if $A\subseteq \ster\R^d$ is internal, then, denoting by $[x]= x + \Null_{\R^d}$ the equivalence class of $x\in\ster\R^d$ in $\GenR^d$,
\[\{[x] \in\GenR^d: x\in A\}\]
is the internal subset of $\GenR^d$ with representative $A$. 
\end{df}
The disadvantage of internal sets in $\GenR^d$ (compared to $\ster\R^d$) is that they are not closed under as many operations as the internal sets in $\ster\R^d$. Even $\{x\in A: x\ge 0\} = A\cap [ [0,\infty) ]$ need not be internal if $A\subseteq\GenR$ is internal \cite{OVInternal}. In particular, the analogous statement of the I.D.P.\ does not hold for internal sets in $\GenR^d$. This makes it hard to convert the proof techniques from section \ref{section-saturation} to techniques for internal sets in $\GenR^d$. Therefore, it is often advantageous to use internal sets in $\ster\R^d$ to prove statements about internal sets in $\GenR^d$.
\smallskip

Internal sets can sometimes compensate for the fact that $\GenR^d$ is not locally compact:
\begin{prop}
Let $A\subseteq \GenR^d$ be internal and sharply bounded and let $B\subseteq\GenR^d$ be an internal sharp neighbourhood of $A$. Then there exists $M\in\N$ such that for each $a\in A$, $B(a,\caninf^M)=\{x\in\GenR^d: \abs{x-a}<\caninf^M\} \subseteq B$.
\end{prop}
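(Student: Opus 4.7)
The plan is to pass to internal representatives $A=\{[x]:x\in\tilde A\}$, $B=\{[y]:y\in\tilde B\}$ in $\ster\R^d$, exploit sharp boundedness to confine a modified $\tilde A$ inside a $\ster$-compact ball, and then argue by contradiction via a cluster-point extraction.

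The first ingredient is a saturation-based characterisation of membership in $B$: applying the Saturation Principle to the decreasing family of internal sets $\{z\in\tilde B:|y-z|\le\rho^n\}_{n\in\N}$ (whose F.I.P.\ is equivalent to each member being non-empty) yields
\[ [y]\in B \iff (\forall n\in\N)(\exists z\in\tilde B)(|y-z|\le\rho^n), \]
so $[y]\notin B$ means exactly $(\exists n\in\N)(\forall z\in\tilde B)(|y-z|>\rho^n)$. The second ingredient is the $\ster$-compact ball $K:=\{x\in\ster\R^d:|x|\le\rho^{-N}\}$, where $N\in\N$ witnesses the sharp boundedness of $A$: by transferred Heine--Borel $K$ is internal and $\ster$-compact, and every point of $A$ has a representative in $K$.

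Assuming the conclusion fails, I produce, for each $M\in\N$, witnesses $x_M\in\tilde A$ and $y_M\in\ster\R^d$ with $|y_M-x_M|\le\rho^M$ and $[y_M]\notin B$; after adjusting $x_M$ by a negligible element I take $x_M'\in K$ with $[x_M']=[x_M]$. The sequence $(x_M')_{M\in\N}$ then lies in the $\ster$-compact $K$, and I extract a cluster point $x^*\in K$ in the $\ster$-topology: every $\ster$-neighbourhood of $x^*$ contains $x_M'$ for infinitely many $M\in\N$. Testing with a $\ster$-ball of infinitely small radius first gives some $M_0\in\N$ with $x_{M_0}'-x^*\in\Null_{\R^d}$, so $[x^*]=[x_{M_0}']\in A$, and the hypothesis at $[x^*]$ supplies $m^*\in\N$ with $B([x^*],\rho^{m^*})\subseteq B$. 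Testing once more with radius $\rho^{m^*+2}$ lets me pick $M\in\N$ with $M\ge m^*+2$ and $|x_M'-x^*|\le\rho^{m^*+2}$, and I estimate
\[ |y_M-x^*|\le|y_M-x_M|+|x_M-x_M'|+|x_M'-x^*|\le 3\rho^{m^*+2}<\rho^{m^*} \]
(using $|x_M-x_M'|\in\Null_{\R^d}$, hence bounded by $\rho^{m^*+2}$, and $3\rho^2<1$ in $\ster\R$), whence $[y_M]\in B([x^*],\rho^{m^*})\subseteq B$, contradicting $[y_M]\notin B$.

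The main obstacle is the cluster-point extraction itself: transferred compactness handles only internal covers of internal sets, whereas the indexing here is the external set $\N$. The cleanest remedy in the spirit of this text is to extend $(x_M')_{M\in\N}$ to an internal sequence $(\xi_M)_{M\in\ster\N}$ in $K$ and apply the Saturation Principle to the internal family $\{x\in K:(\exists M\in\ster\N)(M\ge N\,\&\,|x-\xi_M|\le\rho^n)\}_{n,N\in\N}$, whose F.I.P.\ is immediate since $\xi_{\max_i N_i}$ is a witness; some care is then needed to transfer the cluster property of the resulting $x^*$ from the internal extension back to standard indices $M\in\N$, so that the final estimate applies to an $M$ for which $[y_M]\notin B$ is guaranteed.
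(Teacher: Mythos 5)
Your first ingredient is sound and is in fact the same mechanism the paper uses: saturation applied to the decreasing internal sets $\{z\in\bar B:\abs{y-z}\le\caninf^n\}$ shows that $[y]\notin B$ is equivalent to an internal condition on a representative $y$ (namely $\ster d(y,\bar B)\nleq\caninf^{k}$ for some $k\in\N$). The genuine gap is the cluster-point extraction, which is the load-bearing step of your argument. $*$-compactness is a transferred notion: it controls internal objects only and yields no sequential compactness for the \emph{external} sequence $(x'_M)_{M\in\N}$. Your proposed remedy does not close the gap: extending to an internal $(\xi_M)_{M\in\ster\N}$ and saturating over $\{x\in K:(\exists M\in\ster\N)(M\ge N\ \&\ \abs{x-\xi_M}\le\caninf^n)\}$ only produces a point $x^*$ that is $\caninf^n$-close to some $\xi_M$ with $M\in\ster\N$ \emph{possibly infinitely large}; nothing forces a standard index to witness this, and for infinite $M$ there is no $y_M$ and no guarantee that $[y_M]\notin B$. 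Worse, the property you need can simply fail: in $\ster[0,1]$ the sequence $x_M=[\eps^{1/(M+1)}]$ has no cluster point in the sharp topology, since for $M<M'$ one has $\abs{x_M-x_{M'}}>\caninf^2$ for small $\eps$, so no ball of radius $\caninf^3$ can contain two distinct terms. Finally, even granting a cluster point, your step ``some $M_0\in\N$ has $x'_{M_0}-x^*\in\Null_{\R^d}$'' does not follow: sharp neighbourhoods have radius $\caninf^n$ with $n\in\N$ \emph{finite}, so a cluster point is only $\caninf^n$-close to some term for each finite $n$, with the witnessing index varying with $n$ (the sequence $x_M=\caninf^M$ with sharp limit $0$ already shows that no single term need be at negligible distance).

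The repair is to extract the single bad point of $A$ by quantifier switching over the internal set $\bar A$ itself, which is what the paper does. Negating the conclusion and using your first ingredient gives $k_n\in\N$ with
\[(\forall n\in\N)\ (\exists a\in \bar A)\ \neg\,(\forall x\in B(a,\caninf^n))\ (\ster d(x,\bar B)\le\caninf^{k_n}),\]
and quantifier switching yields one $a\in\bar A$ such that for every $n\in\N$ the ball $B(a,\caninf^n)$ contains a point at non-negligible $*$-distance from $\bar B$, i.e.\ $B([a],\caninf^n)\not\subseteq B$ for all $n$, contradicting directly that $B$ is a sharp neighbourhood of $A$. This bypasses the compact ball $K$ and the cluster point entirely; sharp boundedness of $\bar A$ is needed only to guarantee that $a$ is moderate, so that $[a]$ is indeed a point of $A$.
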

\begin{proof}
Let $\bar A$, $\bar B\subseteq \ster\R^d$ be representatives of $A$, $B$ (with $\bar A$ sharply bounded). Let $\tilde x\in\GenR^d\setminus B$ with representative $x\in \ster\R^d$, then $\ster d(x,\bar B)\not\approxeq 0$. Suppose that the conclusion does not hold. Then $(\forall n\in\N)$ $(\exists a\in \bar A)$ $(\exists x\in B(a,\caninf^n))$ $(\ster d(x, \bar B)\not\approxeq 0)$.
Thus we find $k_n\in\N$, $\forall n\in\N$, such that $(\forall n\in\N)$ $(\exists a\in \bar A)$ $\neg (\forall x\in B(a,\caninf^n))$ $(\ster d(x, \bar B)\le \caninf^{k_n})$. By quantifier switching, we would find $a\in \bar A$ such that for each $n\in\N$, $B(a,\caninf^n)$ contains some $x\in\ster\R^d$ for which $\ster d(x, \bar B)\not\approxeq 0$. Since $\bar A$ is sharply bounded, we find $\tilde a\in A$ such that $B(\tilde a, \caninf^n)\not\subseteq B$, for each $n\in\N$, contradicting the fact that $B$ is a sharp neighbourhood of $A$.
\end{proof}
If $B$ is not internal, the previous proposition fails in general. E.g., let $A=[0,1]\sptilde\times\{0\}\subseteq \GenR^2$ and let $B=\bigcup_{n\in\N, \tilde x\approx 1/n} B((\tilde x, 0), \caninf^n) \cup \bigcup_{\tilde x\in\widetilde{[0,1]}, \tilde x\not\approx 1/n,\forall n} B((\tilde x,0),\caninf)\subseteq\GenR^2$. Then $B$ is a sharp neighbourhood of $A$, but $(1/n, \caninf^n)\notin B$, for each $n\in\N$.

\subsection*{$\Gen(\Omega)$ and the $\rho$-topology on $\ster{\Cnt[\infty](\Omega)}$}
Let $\Omega\subseteq\R^d$ be open. For $u\in\Cnt[\infty](\Omega)$, let $p_m(u):= \sup_{x\in K_m, \abs{\alpha}\le m} \abs{\partial^\alpha u(x)}$, where $(K_m)_m$ is a compact exhaustion of $\Omega$ (the seminorms $p_m$ describe the usual locally convex topology on $\Cnt[\infty](\Omega)$).
\begin{df}
We call \defstyle{$\caninf$-topology} on $\ster{\Cnt[\infty]}(\Omega)$ the translation invariant topology with $\{B_m(0,\caninf^m): m\in\N\}$ as a local base of neighbourhoods of $0$ (with $B_m(0,r):=\{u\in\ster{\Cnt[\infty]}(\Omega): \ster p_m(u) < r\}$, for $r\in\ster(\R^+)$).\\
We call $u\in\ster{\Cnt[\infty](\Omega)}$ \defstyle{moderate} (resp.\ \defstyle{negligible}) if $\ster p_m(u)$ is moderate (resp.\ negligible) in $\ster\R$, for each $m\in\N$. Again, $u$ belongs to the intersection of all $\caninf$-neighbourhoods of $0$ iff $u$ is negligible. We write $u\approxeq_{\Cnt[\infty](\Omega)} v$ (or $u\approxeq v$ if the space is clear from the context) if $u-v$ is negligible. We write $\Mod_{\Cnt[\infty](\Omega)}$ for the set of moderate elements and $\Null_{\Cnt[\infty](\Omega)}$ for the set of negligible elements in $\ster{\Cnt[\infty]}(\Omega)$.
\end{df}
Explicitly,
\begin{align*}
\Mod_{\Cnt[\infty](\Omega)} &= \{u\in\ster{\Cnt[\infty]}(\Omega):  (\forall \alpha\in\N^d) (\forall K\csub \Omega) (\max_{x\in\ster K}\abs{\partial^\alpha u(x)} \text{ is moderate})\}\\
&=\{u\in\ster{\Cnt[\infty]}(\Omega): (\forall \alpha\in\N^d) (\forall x\in \ster\Omega_c) (\partial^\alpha u(x) \text{ is moderate})\}
\end{align*}
\begin{align*}
\Null_{\Cnt[\infty](\Omega)} &= \{u\in\ster{\Cnt[\infty]}(\Omega):  (\forall \alpha\in\N^d) (\forall K\csub \Omega) (\max_{x\in\ster K}\abs{\partial^\alpha u(x)} \text{ is negligible})\}\\
&=\{u\in\ster{\Cnt[\infty]}(\Omega): (\forall \alpha\in\N^d) (\forall x\in \ster\Omega_c) (\partial^\alpha u(x) \text{ is negligible})\}.
\end{align*}

\begin{prop}\label{rho-cnt-Cnt-infty}
Let $T$: $\ster{\Cnt[\infty]}(\Omega)\to\ster{\Cnt[\infty]}(\Omega)$ be internal and $u\in\ster{\Cnt[\infty]}(\Omega)$. Then the following are equivalent:
\begin{enumerate}
\item $(\forall m\in\N)$ $(\exists n\in\N)$ $(\forall v\in\ster{\Cnt[\infty]}(\Omega))$ $(\ster p_n(v-u)\le \rho^n\implies \ster p_m(T(v) - T(u))\le \rho^m)$
\item $(\forall v\in \ster{\Cnt[\infty]}(\Omega))$ $(v\approxeq u\implies T(v)\approxeq T(u))$.
\end{enumerate}
\end{prop}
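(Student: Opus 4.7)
The plan is to mirror Proposition \ref{rho-cnt-sterR}: the forward direction will be immediate from the definition of negligibility combined with the monotonicity of the seminorms $p_k$, while the converse will be obtained by applying underspill to a suitably defined internal subset of $\ster\N$.

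For $(1)\Rightarrow(2)$, I would fix $v\approxeq u$ and an arbitrary $m\in\N$. Choosing the $n=n(m)\in\N$ produced by (1), I note that since $v-u$ is negligible, $\ster p_n(v-u)$ is negligible in $\ster\R$ and in particular satisfies $\ster p_n(v-u)\le\rho^n$. Hypothesis (1) then gives $\ster p_m(T(v)-T(u))\le\rho^m$; as $m$ is arbitrary, $T(v)\approxeq T(u)$.

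For $(2)\Rightarrow(1)$, I would fix $m\in\N$ and introduce
\[
A := \{n\in\ster\N : (\forall v\in\ster{\Cnt[\infty]}(\Omega))(\ster p_n(v-u)\le\rho^n \implies \ster p_m(T(v)-T(u))\le\rho^m)\}.
\]
The key step is to verify that $A$ contains every infinitely large $n\in\ster\N$. If $n$ is infinitely large and $\ster p_n(v-u)\le\rho^n$, then transferring the monotonicity $p_k\le p_n$ (which holds on $\Cnt[\infty](\Omega)$ for $k\le n$ since the $K_m$ form an increasing exhaustion and $\{\alpha:|\alpha|\le m\}$ is increasing in $m$) yields $\ster p_k(v-u)\le\rho^n$ for every $k\in\N$; since $\rho^n$ is negligible for infinite $n$, so is $v-u$. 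Hypothesis (2) then delivers $T(v)\approxeq T(u)$, and in particular $\ster p_m(T(v)-T(u))\le\rho^m$, so $n\in A$. An application of I.D.P.\ shows that $A$ is internal (the side condition $(\exists v\in\ster{\Cnt[\infty]}(\Omega))(\ster p_n(v-u)\le\rho^n)$ is trivially witnessed by $v=u$), and underspill (part 4 of the spilling principles) produces some $n\in A\cap\N$, which is exactly the conclusion (1) at the chosen $m$.

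The only delicate point is the use of I.D.P.: one has to recognise the inner implication in F5'-form, whose side condition is immediate from $v=u$. The structural fact that does the real work is the monotonicity $p_k\le p_n$ for $k\le n$, which lets ``$\ster p_n(v-u)\le\rho^n$ for infinite $n$'' be upgraded to ``$v-u$ is negligible''; without it, the inclusion of all infinite elements in $A$ would break down and underspill could not be applied.
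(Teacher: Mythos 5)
Your proof is correct and takes essentially the same approach as the paper: the paper's own proof is just the one-line remark ``Analogous to the proof of proposition \ref{rho-cnt-sterR}'', and your argument is precisely that analogue (forward direction from the definition of negligibility; converse via I.D.P.\ and underspill applied to the internal set $A$), with the one genuinely new ingredient --- the monotonicity $p_k\le p_n$ needed to upgrade $\ster p_n(v-u)\le\rho^n$ for infinitely large $n$ to negligibility of $v-u$ --- correctly identified and justified.
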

\begin{proof}
Analogous to the proof of proposition \ref{rho-cnt-sterR}.
\end{proof}

For similar reasons as on $\ster\R$, the algebra of \defstyle{Colombeau generalized functions} on $\Omega$ is
$\Gen(\Omega) := \Mod_{\Cnt[\infty](\Omega)}/\Null_{\Cnt[\infty](\Omega)}$. 
The correspondence of this definition with the classical definition of $\Gen(\Omega)$ follows from $\stermathop\max_{x\in\ster K}\abs{\partial^\alpha u(x)} = [\max_{x\in K} \abs{\partial^\alpha u_\eps(x)}]$. Explicitly, by I.D.P.,
\begin{multline*}
\{\abs{\partial^\alpha u(x)}: x\in\ster K\} = \{y\in\ster\R: (\exists x\in \ster K) (y=\abs{\partial^\alpha u(x)})\}\\
= [\{y\in\R: (\exists x\in K) (y=\abs{\partial^\alpha u_\eps(x)})\}] = [\{\abs{\partial^\alpha u_\eps(x)}: x\in K\}].
\end{multline*}
The sharp topology is the Hausdorff (even metrizable) topology on $\Gen(\Omega)$ induced by the $\caninf$-topology on $\ster{\Cnt[\infty]}(\Omega)$. Again, well-definedness of internal operations (such as the product) on $\Gen(\Omega)$ corresponds with $\caninf$-continuity of the corresponding operations in $\ster{\Cnt[\infty]}(\Omega)$.

\begin{prop}[Automatic continuity]
Let $T$: $\Gen(\Omega)\to\Gen(\Omega)$ be an internal operator. Then $T$ is sharply continuous.
\end{prop}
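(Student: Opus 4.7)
The plan is to lift $T$ to an internal representative $\tilde T\colon\ster{\Cnt[\infty]}(\Omega)\to\ster{\Cnt[\infty]}(\Omega)$, apply Proposition~\ref{rho-cnt-Cnt-infty} to get the quantitative $\caninf$-continuity of $\tilde T$ at every moderate point, and then read the resulting estimate off as sharp continuity of $T$ on the quotient. Since $T$ is internal, it is induced by some internal $\tilde T$; well-definedness of $T$ on $\Gen(\Omega)$ says exactly that $\tilde T$ sends $\Mod_{\Cnt[\infty](\Omega)}$ into itself and preserves the relation $\approxeq$ on moderate elements, i.e.\ whenever $u,v\in\Mod_{\Cnt[\infty](\Omega)}$ with $u\approxeq v$, one has $\tilde T(u)\approxeq\tilde T(v)$.

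Fix $u\in\Mod_{\Cnt[\infty](\Omega)}$. I would then verify condition (2) of Proposition~\ref{rho-cnt-Cnt-infty} for $\tilde T$ at $u$. Let $v\in\ster{\Cnt[\infty]}(\Omega)$ with $v\approxeq u$; then $v-u$ is negligible, so $v=u+(v-u)$ is itself moderate, and the previous paragraph yields $\tilde T(v)\approxeq\tilde T(u)$. Proposition~\ref{rho-cnt-Cnt-infty} then delivers condition (1): for every $m\in\N$ there exists $n\in\N$ such that
\[
(\forall v\in\ster{\Cnt[\infty]}(\Omega))\,\bigl(\ster p_n(v-u)\le\caninf^n\implies \ster p_m(\tilde T(v)-\tilde T(u))\le\caninf^m\bigr).
\]
Passing to equivalence classes, this is exactly the statement that $T$ sends a basic sharp neighbourhood of $[u]$ into a prescribed basic sharp neighbourhood of $T([u])$, i.e.\ $T$ is sharply continuous at $[u]$; since $[u]\in\Gen(\Omega)$ was arbitrary, we are done.

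The main (and essentially only) delicate point is that condition (2) of Proposition~\ref{rho-cnt-Cnt-infty} is quantified over \emph{all} $v\in\ster{\Cnt[\infty]}(\Omega)$, whereas the well-definedness of $T$ on $\Gen(\Omega)$ a priori only gives information about moderate $v$. The observation that $v\approxeq u$ together with $u$ moderate forces $v$ moderate bridges this gap, after which the statement reduces to a direct invocation of Proposition~\ref{rho-cnt-Cnt-infty}. All underspill work is already packaged inside that proposition, so no further spilling argument is required here; the automatic continuity is really just the quotient incarnation of the infinitesimal-to-quantitative continuity transfer already proved for $\ster{\Cnt[\infty]}(\Omega)$.
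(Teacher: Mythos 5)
Your proof is correct and follows essentially the same route as the paper: lift $T$ to an internal representative, observe that well-definedness on $\Gen(\Omega)$ is exactly condition (2) of Proposition~\ref{rho-cnt-Cnt-infty} at each moderate $u$, and read off the quantitative condition (1) as sharp continuity on the quotient. You are in fact slightly more careful than the paper's (very terse) proof in noting explicitly that $v\approxeq u$ with $u$ moderate forces $v$ moderate, which is the point that lets the hypothesis on moderate elements suffice for condition (2).
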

\begin{proof}
To be precise, if $T$ has $\bar T$: $\ster{\Cnt[\infty]}(\Omega)\to \ster{\Cnt[\infty]}(\Omega)$ as a representative, we assume that $T$ is well-defined on the whole of $\Gen(\Omega)$, i.e., $\bar T(\Mod_{\Cnt[\infty](\Omega)})\subseteq \Mod_{\Cnt[\infty](\Omega)}$ and $u\approxeq v\implies \bar T u \approxeq \bar T v$, for each $u,v\in \Mod_{\Cnt[\infty](\Omega)}$. By proposition \ref{rho-cnt-Cnt-infty}, this means that $\bar T$ is $\caninf$-continuous on $\Mod_{\Cnt[\infty](\Omega)}$.
\end{proof}
As illustrated by the previous theorem, Colombeau theory can from the nonstandard point of view be considered as the study of $\caninf$-continuous internal maps defined on the (external) set of moderate elements (in $\ster\R$, $\ster{\Cnt[\infty]}(\Omega)$, \dots).

\subsection*{$\Gen^\infty$-regularity}
\begin{df}
The subalgebra of $\Gen^\infty$-regular Colombeau generalized functions on $\Omega$ is defined by
\[
\Gen^\infty(\Omega) := \{u\in\ster{\Cnt[\infty]}(\Omega): (\forall K\csub \Omega) (\exists N\in\N) (\forall \alpha\in\N^d) (\max_{x\in\ster K}\abs{\partial^\alpha u(x)} \le \caninf^{-N})\}/\Null_{\Cnt[\infty](\Omega)}.
\]
\end{df}

\begin{df}
Let $u\in\ster{\Cnt[\infty]}(\Omega)$ and $x\in\ster\Omega$. We say that $u$ is $\Gen^\infty$-regular at $x$ if there exists $N\in\N$ such that for each $\alpha\in\N^d$, $\abs{\partial^\alpha u(x)}\le \caninf^{-N}$.
\end{df}

\begin{prop}[Pointwise characterization of $\Gen^\infty(\Omega)$]\label{prop_pointwise_regularity}
Let $u\in\ster{\Cnt[\infty]}(\Omega)$. The following are equivalent:
\begin{enumerate}
\item $(\forall K\csub \Omega)$ $(\exists N\in\N)$ $(\forall \alpha\in\N^d)$ $(\max_{x\in\ster K}\abs{\partial^\alpha u(x)} \le \caninf^{-N})$
\item $u$ is $\Gen^\infty$-regular at each $x\in\ster\Omega_c$.
\end{enumerate}
\end{prop}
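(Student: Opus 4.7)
The plan is to treat the two implications separately. $(1)\Rightarrow(2)$ is immediate: for $x\in\ster\Omega_c$ one has $x\in\ster K$ for some $K\csub\Omega$, whence $|\partial^\alpha u(x)|\le\max_{y\in\ster K}|\partial^\alpha u(y)|\le\rho^{-N}$ for every $\alpha\in\N^d$.

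For $(2)\Rightarrow(1)$, I fix $K\csub\Omega$ and argue by contradiction, applying the Quantifier Switching corollary. For $m\in\N$, set
\[
N_m\ :=\ \min\bigl\{N\in\ster\N:\max\{|\partial^\alpha u(x)|:x\in\ster K,\,\alpha\in\ster\N^d,\,|\alpha|\le m\}\le\rho^{-N}\bigr\}.
\]
Because $\{\alpha\in\ster\N^d:|\alpha|\le m\}=\{\alpha\in\N^d:|\alpha|\le m\}$ is finite and $\ster K$ is $*$-compact, the $*$-max is attained at some $(x^*,\alpha^*)$ with $\alpha^*\in\N^d$; applying (2) at $x^*$ shows $N_m\in\N$ (not merely $\ster\N$). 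The sequence $(N_m)_m$ is nondecreasing, and if it is bounded by some $N_0\in\N$ then (1) holds. So, for contradiction, assume $N_m\to\infty$.

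Now pick $f:\N\to\N$ with $f(m)<N_m$ and $f(m)\to\infty$ (for instance $f(m):=N_m-1$ once $N_m\ge 1$, shifting the index if needed), and apply the corollary on $X=\ster K$ with trivial $P_n$ and the transferrable formula
\[
Q_n(x)\ :=\ (\forall\alpha\in\ster\N^d,\ |\alpha|\le n)(|\partial^\alpha u(x)|\le\rho^{-f(n)}).
\]
The hypothesis $\forall m\in\N\,\exists x\in\ster K:\neg Q_m(x)$ is precisely the inequality $N_m>f(m)$ and holds by construction. The conclusion furnishes $x^*\in\ster K$ such that for every $n$ there exists $\alpha\in\ster\N^d$ with $|\alpha|\le n$ and $|\partial^\alpha u(x^*)|>\rho^{-f(n)}$; the constraint $|\alpha|\le n$ together with $n\in\N$ forces $\alpha\in\N^d$, and since $f(n)\to\infty$ the point $x^*\in\ster K\subseteq\ster\Omega_c$ fails to be $\Gen^\infty$-regular, contradicting (2).

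The main obstacle, which rules out cruder saturation arguments, is the choice of the exponent in $Q_n$. A direct saturation on $\{x\in\ster K:\exists\alpha\in\ster\N^d,\,|\partial^\alpha u(x)|>\rho^{-N}\}$ yields a point whose witnessing $\alpha$'s may have infinite $|\alpha|$, which pointwise $\Gen^\infty$-regularity does not control. Coupling $|\alpha|\le n$ with the slowly growing exponent $\rho^{-f(n)}$—rather than with $\rho^{-n}$, which would demand the possibly false $N_m\ge m+1$—both makes the quantifier switching hypothesis available and forces the switched witnesses to be standard multi-indices.
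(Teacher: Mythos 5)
Your overall strategy --- negate (1) for a fixed $K$ and use quantifier switching to produce a single bad point $x^*\in\ster K$ --- is the same as the paper's, but your implementation rests on a claim that is false in this framework: $\{\alpha\in\ster\N^d:\abs{\alpha}\le m\}$ is \emph{not} equal to $\{\alpha\in\N^d:\abs{\alpha}\le m\}$. For a finite set $F$ with more than one element one has $\ster F\supsetneqq F$ here, because a net taking values in $F$ need not be eventually constant (an ultrafilter would repair this, as the last section of the paper notes, but identification ``for small $\eps$'' does not). This false identity is load-bearing twice. First, it is what lets you conclude $N_m\in\N$; without it you only get that $N_m$ is a \emph{finite} element of $\ster\N$ (bounded by a standard integer, namely the maximum of the finitely many regularity constants $N_\alpha$ for $\alpha\in\N^d$, $\abs{\alpha}\le m$), and then ``$f(m):=N_m-1$'' is not a map $\N\to\N$ and ``$N_m\to\infty$'' needs reinterpretation. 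Second, and more seriously, the witness $\alpha$ produced by $\neg Q_n(x^*)$ lives in $\ster{\{\alpha\in\N^d:\abs{\alpha}\le n\}}$ and may be a genuinely nonstandard (oscillating) multi-index, so it does not directly contradict $\Gen^\infty$-regularity at $x^*$, which only constrains standard $\alpha\in\N^d$.

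Both defects are repairable: replace $N_m$ by the least \emph{standard} upper bound $\tilde N_m:=\min\{k\in\N:N_m\le k\}$, and observe that regularity at $x^*$ with constant $N$ bounds $\abs{\partial^\alpha u(x^*)}$ by $\caninf^{-N}$ also for every $\alpha\in\ster\N^d$ with $\abs{\alpha}\le n$, $n\in\N$, since $\eps$-wise one may take the maximum over the finitely many standard values attained by $\alpha_\eps$. But the paper's proof sidesteps the issue entirely and is much shorter: from the negation of (1) it extracts, for each $n\in\N$, a fixed \emph{standard} $\alpha_n\in\N^d$ with $(\exists x\in\ster K)$ $(\abs{\partial^{\alpha_n}u(x)}\nleq\caninf^{-n})$, and applies quantifier switching with $Q_n(x):=(\abs{\partial^{\alpha_n}u(x)}\le\caninf^{-n})$. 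No quantifier over multi-indices ever enters the internal formula, so the ``infinite $\abs{\alpha}$'' worry you raise in your last paragraph --- which is a legitimate concern --- never arises, and no auxiliary function $f$ is needed.
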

\begin{proof}
$\Rightarrow$: clear.\\
$\Leftarrow$: Suppose that (1) does not hold. Then we find $K\csub \Omega$ and $\alpha_n\in\N^d$, $\forall n\in\N$ such that $(\forall n\in\N)$ $(\exists x\in\ster K)$ $(\abs{\partial^{\alpha_n} u(x)}\nleq\caninf^{-n})$. By quantifier switching, $(\exists x\in\ster K)$ $(\forall n\in\N)$ $(\abs{\partial^{\alpha_n} u(x)}\nleq\caninf^{-n})$, contradicting the hypotheses.
\end{proof}

Hence we obtain (cf.\ \cite[Thm.~5.1]{OPS}):
\begin{align*}
\Gen^\infty(\Omega) = & \{u\in\ster{\Cnt[\infty]}(\Omega): u \text{ is $\Gen^\infty$-regular at each } x\in\ster\Omega_c\}\\
&/\{u\in\ster{\Cnt[\infty]}(\Omega):  (\forall \alpha\in\N^d) (\forall x\in\ster\Omega_c) (\partial^\alpha u(x) \text{ is negligible})\}.
\end{align*}

Similarly, we have the following refinement (cf.\ \cite[Prop.~5.3]{HVPointwiseReg}):
\begin{prop}\label{prop_pointwise_reg_local}
Let $u\in\ster{\Cnt[\infty]}(\Omega)$ and $(A_n)_{n\in\N}$ a decreasing sequence of internal subsets of $\ster\Omega$. Let $B:=\bigcap_{n\in\N} A_n$.
Then the following are equivalent:
\begin{enumerate}
\item $(\forall K\csub \Omega)$ $(\exists N\in\N)$ $(\forall \alpha\in\N^d)$ $(\exists m\in\N)$ $(\forall x\in\ster K\cap A_m)$ $(\abs{\partial^\alpha u(x)} \le \caninf^{-N})$
\item $u$ is $\Gen^\infty$-regular at each $x\in B\cap \ster\Omega_c$.
\end{enumerate}
\end{prop}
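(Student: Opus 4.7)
The plan is to mirror the proof of Proposition \ref{prop_pointwise_regularity}, with the decreasing sequence $(A_n)_n$ folded into the predicates to which quantifier switching is applied. The direction $(1)\Rightarrow(2)$ should be immediate: any $x\in B\cap\ster\Omega_c$ lies in $\ster K$ for some $K\csub\Omega$ and in every $A_m$, so the integer $N$ supplied by $(1)$ for this $K$ bounds $|\partial^\alpha u(x)|$ by $\caninf^{-N}$ uniformly in $\alpha$, witnessing $\Gen^\infty$-regularity of $u$ at $x$.

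For the reverse direction I will argue by contradiction. Suppose $(1)$ fails for some $K\csub\Omega$: for every $N\in\N$ there is $\alpha_N\in\N^d$ such that for every $m\in\N$ some $x\in\ster K\cap A_m$ satisfies $|\partial^{\alpha_N} u(x)|\nleq\caninf^{-N}$. The crucial observation is that $\alpha$ depends on $N$ alone, not on $m$. Set $X:=\ster K$ (internal) and
\[ P_n(x) := (x\in A_n), \qquad Q_n(x) := (|\partial^{\alpha_n} u(x)|\le\caninf^{-n}). \]
Both are atomic, hence transferrable formulas whose parameters $A_n$, $\alpha_n$, $u$ and $\caninf^{-n}$ are internal, and $P_{n+1}\Rightarrow P_n$ because $(A_n)_n$ is decreasing.

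The data extracted above give, for every pair $(n,m)\in\N\times\N$, some $x\in X$ with $P_n(x)\,\&\,\neg Q_m(x)$: just pick inside $\ster K\cap A_n$ any witness to the failure of $Q_m$ (choice permitted by the hypothesis with indices read as $N=m$ and ``$m$''$=n$). The quantifier-switching corollary then yields a single $x\in\ster K$ satisfying $P_n(x)\,\&\,\neg Q_n(x)$ simultaneously for every $n\in\N$. This $x$ lies in $\ster K\cap\bigcap_n A_n\subseteq B\cap\ster\Omega_c$, while $|\partial^{\alpha_n} u(x)|\nleq\caninf^{-n}$ for every $n$ rules out any uniform $N$; hence $u$ is not $\Gen^\infty$-regular at $x$, contradicting $(2)$.

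The main obstacle is the index bookkeeping when unwinding $\neg(1)$: the argument works only once the quantifier pattern is reorganized so that the multi-index $\alpha$ is indexed by the \emph{same} natural number that indexes the set $A_n$, making $Q_n$ a single-parameter transferrable predicate matching the template required by quantifier switching. Once this alignment is in place, the rest is a direct adaptation of the ``pointwise characterisation'' proof.
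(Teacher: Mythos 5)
Your proof is correct and follows essentially the same route as the paper: negate (1), note that the multi-index depends only on $N$, and apply quantifier switching on $\ster K$ with $P_n(x):=(x\in A_n)$ and $Q_n(x):=(\abs{\partial^{\alpha_n}u(x)}\le\caninf^{-n})$ to produce a single point of $B\cap\ster\Omega_c$ violating (2). The index bookkeeping you flag is handled exactly as in the paper's one-line application of the corollary.
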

\begin{proof}
$\Rightarrow$: clear.\\
$\Leftarrow$: Suppose that (1) does not hold. Then we find $K\csub\Omega$ and $\alpha_n\in\N^d$, $\forall n\in\N$ such that $(\forall n,m\in\N)$ $(\exists x\in\ster K)$ $(x\in A_m \,\&\, \abs{\partial^{\alpha_n} u(x)}\nleq\caninf^{-n})$. By quantifier switching, $(\exists x\in\ster K)$ $(\forall n\in\N)$ $(x\in A_n \,\&\, \abs{\partial^{\alpha_n} u(x)}\nleq\caninf^{-n})$, contradicting the hypotheses.
\end{proof}

If we want to translate this result into the language of internal sets in Colombeau theory \cite{OVInternal}, we still have to ensure independence of representatives:
\begin{cor}
Let $u\in\Gen(\Omega)$ and $(A_n)_{n\in\N}$ a decreasing sequence of internal subsets of $\widetilde\Omega$. Let $B:= \bigcap_{n\in\N} A_n$. Suppose that $B\cap\widetilde\Omega_c\ne\emptyset$. Then the following are equivalent:
\begin{enumerate}
\item $(\forall K\csub \Omega)$ $(\exists N\in\N)$ $(\forall \alpha\in\N^d)$ $(\exists m\in\N)$ $(\forall \tilde x\in \widetilde K\cap A_m)$ $(\abs{\partial^\alpha u(\tilde x)} \le \caninf^{-N})$
\item $u$ is $\Gen^\infty$-regular at each $\tilde x\in B\cap \widetilde\Omega_c$ (i.e., $(\forall \tilde x\in B\cap\widetilde\Omega_c)$ $(\exists N\in\N)$ $(\forall \alpha\in\N^d)$ $(\abs{\partial^\alpha u(\tilde x)}\le\caninf^{-N})$).
\end{enumerate}
\end{cor}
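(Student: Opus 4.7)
Direction $(1) \Rightarrow (2)$ is immediate: given $\tilde x \in B \cap \widetilde\Omega_c$, choose $K \csub \Omega$ with $\tilde x \in \widetilde K$, apply (1) with this $K$ to obtain a uniform $N$, and use $\tilde x \in B \subseteq A_m$ for every $m$ to conclude $\abs{\partial^\alpha u(\tilde x)} \le \rho^{-N}$ for every $\alpha$, i.e., $\Gen^\infty$-regularity of $u$ at $\tilde x$.

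For $(2) \Rightarrow (1)$ I would argue by contrapositive, reducing to a saturation argument in $\ster\Omega$. Pick a representative $\bar u$ of $u$, internal representatives $\bar A_n \subseteq \ster\Omega$ of $A_n$, and a compact $K' \csub \Omega$ with $K \subseteq \mathrm{int}(K')$. The negation of (1) yields a sequence $(\alpha_N)_N$ and, for every $N, m \in \N$, a point $z_{N,m} \in \ster K' \cap \bar A_m$ with $\abs{\partial^{\alpha_N} \bar u(z_{N,m})} \nleq k\,\eps^{-N}$ in $\ster\R$ for every $k \in \N$ (the faithful translation of $\nleq \rho^{-N}$ from $\widetilde\R$ to $\ster\R$). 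I then apply the saturation principle in $\ster K'$ to the countable family of internal sets $E_{n,m} := \ster K' \cap (\bar A_n + B(0, \rho^m))$ together with the cointernal sets $D_{N,k} := \{z \in \ster K' : \abs{\partial^{\alpha_N} \bar u(z)} \nleq k\,\eps^{-N}\}$, producing $z^* \in \bigcap_{n,m} E_{n,m} \cap \bigcap_{N,k} D_{N,k}$. For each fixed $n$, a second (nested) saturation on the decreasing internal sets $\{y \in \bar A_n : \abs{z^* - y} \le \rho^m\}$ extracts $y^*_n \in \bar A_n$ with $z^* \approxeq y^*_n$, so $[z^*] \in A_n$ for every $n$ and hence $[z^*] \in B \cap \widetilde\Omega_c$. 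On the other hand, $z^* \in \bigcap_{N,k} D_{N,k}$ translates back to $\abs{\partial^{\alpha_N} u([z^*])} \nleq \rho^{-N}$ for every $N$, contradicting the $\Gen^\infty$-regularity of $u$ at $[z^*]$ required by (2).

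The main obstacle is verifying the weak form of the finite intersection property needed for the first saturation: the chosen representatives $\bar A_n$ need not be decreasing in $\ster\Omega$, even though $A_n$ is decreasing in $\widetilde\Omega$, so the naive intersection $\bar A_{n_1} \cap \cdots \cap \bar A_{n_p}$ can be empty. The infinitesimal-thickening $\bar A_n + B(0, \rho^m)$ is the key device: $\widetilde\Omega$-decreasingness guarantees that every $x \in \bar A_{n_{\max}}$ admits, for each $i$, some $y \in \bar A_{n_i}$ with $x \approxeq y$, so $\bar A_{n_{\max}} \subseteq \bar A_{n_i} + B(0, \rho^{m_i})$; consequently any $E_{n_1, m_1} \cap \cdots \cap E_{n_p, m_p} \cap D_{N,k}$ contains $\ster K' \cap \bar A_{n_{\max}} \cap D_{N,k}$, which is nonempty since it contains $z_{N, n_{\max}}$.
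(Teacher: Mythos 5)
Your overall architecture is sound and, once unpacked, rests on the same key device as the paper's proof: the paper also compensates for the fact that representatives $\bar A_n$ of a decreasing sequence in $\widetilde\Omega$ need not be decreasing in $\ster\Omega$ by an infinitesimal thickening (it sets $\bar C_n:=\{x\in\ster\Omega:\ster d(x,\bar A_n)\le\caninf^n\}$, checks that $(\bar C_n)_n$ is decreasing and that $\bigcap_n\bar C_n\cap\ster\Omega_c$ projects exactly onto $B\cap\widetilde\Omega_c$), and then simply invokes Proposition \ref{prop_pointwise_reg_local} instead of re-running the quantifier switching by hand as you do. So your proof is a legitimate ``inlined'' version of the paper's; the price you pay for not routing through Proposition \ref{prop_pointwise_reg_local} is that you must manage the saturation family yourself, and that is where a genuine error occurs.

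The step that fails is your translation of $\abs{\partial^{\alpha_N}u(\tilde x)}\nleq\caninf^{-N}$ from $\GenR$ to $\ster\R$. For $\tilde a\in\GenR$ with representative $a$, $\tilde a\nleq\caninf^{-N}$ means that no negligible perturbation restores the inequality, i.e.\ $a\nleq\caninf^{-N}+\caninf^q$ in $\ster\R$ for \emph{some} $q\in\N$; it does \emph{not} mean $a\nleq k\caninf^{-N}$ for every $k\in\N$. (Take $a=2\caninf^{-N}$: then $\tilde a\nleq\caninf^{-N}$ in $\GenR$, yet $a\le 2\caninf^{-N}$ in $\ster\R$.) Hence the witnesses $z_{N,m}$ extracted from the negation of (1) need not lie in $D_{N,k}$ for $k\ge 2$, and your F.I.P.\ verification (``nonempty since it contains $z_{N,n_{\max}}$'') breaks for those sets. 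The repair is easy: use a single cointernal set per $N$, namely $D_N:=\{z\in\ster{K'}:\abs{\partial^{\alpha_N}\bar u(z)}\nleq\caninf^{-N}\}$, which does contain every $z_{N,m}$ (since $\nleq\caninf^{-N}+\caninf^q$ implies $\nleq\caninf^{-N}$), and recover the contradiction with (2) by an index shift: $z^*\in D_{N+1}$ gives $\abs{\partial^{\alpha_{N+1}}\bar u(z^*)}\nleq\caninf^{-N-1}$, and since $\caninf^{-N}+\caninf^q\le\caninf^{-N-1}$ for small $\eps$ and every $q$, this yields $\abs{\partial^{\alpha_{N+1}}u([z^*])}\nleq\caninf^{-N}$ in $\GenR$ for every $N$, which is precisely the failure of $\Gen^\infty$-regularity at $[z^*]$. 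The remaining steps (the inclusion $\bar A_{n_{\max}}\subseteq\bar A_{n_i}+B(0,\caninf^{m_i})$, the choice of $K'$ with $K\subseteq\mathrm{int}(K')$ to absorb the negligible discrepancy between representatives, and the nested saturation producing $y_n^*\in\bar A_n$ with $z^*\approxeq y_n^*$) are all correct.
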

\begin{proof}
$\Rightarrow$: clear.\\
$\Leftarrow$: Let $\bar A_n$ be representatives of $A_n$. Let $\bar C_n:= \bar A_n + \caninf^n = \{x\in \ster\Omega: \ster d(x, \bar A_n)\le \caninf^n\}$. Then $\bar C_n$ are internal by I.D.P.\ and $(\bar C_n)_{n\in\N}$ is decreasing. Let $\tilde x\in B\cap\widetilde\Omega_c$ with representative $x$. Then $x\in \bigcap_{n\in\N} \bar C_n\cap \ster\Omega_c$. Conversely, if $x\in\bigcap_{n\in\N} \bar C_n\cap \ster\Omega_c$, then $x$ represents $\tilde x\in B \cap \widetilde\Omega_c$. The result follows by proposition \ref{prop_pointwise_reg_local}, since for each $K\csub \Omega$, there exists $L\csub\Omega$ such that each representative of $\tilde x\in \widetilde K$ belongs to $\ster L$.
\end{proof}

\subsection*{$\Gen_E$}
Let $E$ be a locally convex vector space (belonging to the nongeneralized objects) with its topology generated by a family of seminorms $(p_i)_{i\in I}$. Then $\ster p_i$: $\ster E\to \ster\R$ are well-defined. As before,
\begin{align*}
\Gen_E \cong & \{ u\in \ster E: (\forall i\in I) (\ster p_i(u) \text{ is moderate})\}\\
&/\{ u\in \ster E: (\forall i\in I) (\ster p_i(u) \text{ is negligible})\}.
\end{align*}

\begin{prop}
Let the topology of $E$ be generated by a countable family of seminorms $(p_n)_{n\in\N}$. Then $\Gen_E$ is complete.
\end{prop}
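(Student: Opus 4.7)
Since $(p_n)_{n\in\N}$ may be replaced by the increasing family $(\max(p_1,\ldots,p_n))_{n}$, the sharp topology on $\Gen_E$ is metrizable, and completeness reduces to showing that every Cauchy sequence converges. The plan is to produce the limit inside $\ster E$ by applying the saturation principle to a suitable decreasing family of internal sets, and only then descend to the quotient $\Gen_E$.

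Given a Cauchy sequence $(u_k)_{k\in\N}$ in $\Gen_E$, I fix moderate representatives $\bar u_k\in\ster E$. Translating the Cauchy condition to representatives, I obtain for each $m\in\N$ an index $K_m\in\N$ (WLOG strictly increasing in $m$) such that $\ster p_m(\bar u_k-\bar u_l)\le\caninf^m$ in $\ster\R$ for all $k,l\ge K_m$; any constants appearing from this translation will be absorbed later by stepping from $m$ to $m+1$ and using that the seminorms are increasing. For each $m$, I then introduce the internal set
\[
C_m:=\{v\in\ster E:\ster p_m(v-\bar u_{K_m})\le\caninf^m\},
\]
which is internal by I.D.P.\ (the side condition is witnessed by $\bar u_{K_m}\in C_m$).

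The family $(C_m)_{m\in\N}$ has the F.I.P.: for any $M\in\N$, $\bar u_{K_M}\in C_1\cap\cdots\cap C_M$, since for $m\le M$ monotonicity of $(K_m)$ together with the Cauchy bound at level $m$ yields $\ster p_m(\bar u_{K_M}-\bar u_{K_m})\le\caninf^m$. Applying the saturation principle to the internal set $\ster E$ produces $\bar u\in\bigcap_{m\in\N}C_m$. This $\bar u$ is moderate, because $\ster p_m(\bar u)\le\ster p_m(\bar u_{K_m})+\caninf^m$ for each $m$, and hence $u:=[\bar u]\in\Gen_E$ is well defined. For $k\ge K_m$ the triangle inequality gives $\ster p_m(\bar u-\bar u_k)\le 2\caninf^m$; using $\ster p_m\le\ster p_{m+1}$ and stepping to index $m+1$ converts this into $\ster p_m(\bar u-\bar u_k)\le\caninf^m$ for all $k\ge K_{m+1}$, so $u_k\to u$ in the sharp topology.

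The main obstacle is the initial bookkeeping step: translating ``Cauchy in the metrizable sharp topology of $\Gen_E$'' (an inequality among equivalence classes in $\GenR$) into a clean $\ster\R$-inequality on fixed representatives in the form needed to feed the saturation principle. Once this is in place, the rest of the argument is essentially the triangle inequality, with saturation playing the role that the explicit construction of a limit of partial sums would play in a classical proof of completeness of a metrizable locally convex space.
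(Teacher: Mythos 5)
Your proof is correct and takes essentially the same route as the paper: the paper extracts $\bar u$ by quantifier switching applied to $(\forall m)(\exists \bar u\in\ster E)(\ster p_1(\bar u-\bar u_{N_1})\le\caninf\,\&\,\dots\,\&\,\ster p_m(\bar u-\bar u_{N_m})\le\caninf^m)$, which is precisely your saturation argument on the internal sets $C_m$ (quantifier switching is itself proved by that construction). The remaining steps --- moderateness of $\bar u$ via the triangle inequality and convergence using the increasing seminorms --- coincide with the paper's.
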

\begin{proof}
W.l.o.g., $(p_n)_{n\in\N}$ is increasing. Let $(u_n)_{n\in\N}$ be a Cauchy sequence in $\Gen_E$. Let $\bar u_n\in\ster E$ be representatives of $u_n$. Then for each $m\in\N$, there exists $N_m$ (w.l.o.g.\ increasing) such that $\ster p_m(\bar u_k - \bar u_l)\le \caninf^m$, as soon as $k,l\ge N_m$. Hence $(\forall m\in\N)$ $(\exists \bar u \in \ster E)$ $(\ster p_1(\bar u - \bar u_{N_1})\le \caninf \,\&\, \dots \,\&\, \ster p_m(\bar u - \bar u_{N_m})\le \caninf^m)$. By quantifier switching, we find $\bar u \in \ster E$ such that $\ster p_m(\bar u - \bar u_{N_m})\le \caninf^m$, for each $m\in\N$. Then $\ster p_m(\bar u)\le \ster p_m(\bar u_{N_m})+ \caninf^m$ is moderate, for each $m\in\N$, so $\bar u$ represents $u\in\Gen_E$. Let $n\in\N$. Then for each $m\ge n$, $\ster p_n(\bar u - \bar u_{N_m})\le \ster p_m(\bar u - \bar u_{N_m})\le \caninf^m$. Hence $u = \lim_{m\to\infty} u_{N_m}$. Since $(u_n)_{n\in\N}$ is a Cauchy sequence, also $u = \lim_{m\to\infty} u_m$.
\end{proof}

\section{Nonstandard Analysis}
The above construction is (up to details) the one introduced in \cite{SL} as a rigorous model for doing analysis with infinitesimals. The relation between nonlinear generalized functions and this theory was already noticed in \cite{Ob92}. \emph{Nonstandard analysis} is a refinement of \cite{SL}: here the nets are identified in a more sophisticated way than just `for small $\eps$'. If we write $\Filter := \{S\subseteq (0,1): (\exists \eta\in (0,1)) ((0,\eta)\subseteq S)\}$, then `$a_\eps = b_\eps$ for small $\eps$' is equivalent with: $\{\eps\in(0,1): a_\eps = b_\eps \}\in \Filter$. The set $\Filter$ clearly has the following set-theoretic properties:
\begin{enumerate}
\item[(F1)] $(0,1)\in\Filter$
\item[(F2)] $S\in\Filter$, $S\subseteq T\subseteq (0,1)$ $\implies$ $T\in\Filter$
\item[(F3)] $S,T\in\Filter \implies S\cap T\in\Filter$.
\item[(F4)] $\bigcap_{S\in\mathcal  F} S = \emptyset$.
\end{enumerate}
A set with these properties is called a \emph{free filter} on $(0,1)$.\\
A free filter on $(0,1)$ with the additional property
\begin{enumerate}
\item[(UF)] $S\in \Filter$ or $(0,1)\setminus S\in \Filter$, for each $S\subseteq (0,1)$
\end{enumerate}
is called a \emph{free ultrafilter} on $(0,1)$. By means of Zorn's lemma, one can show that every free filter can be extended to a free ultrafilter. If we replace $\Filter$ by a free ultrafilter and we identify two nets $(a_\eps)_\eps$, $(b_\eps)_\eps$ if they coincide on some $S\in\Filter$, then we obtain a model of nonstandard analysis (frequently, also free ultrafilters on other index sets than $(0,1)$ are used) \cite{Rob}.

The consequences of this technical change are very elegant: any formula in the formal language defined in section \ref{section_transfer} is then transferrable without restrictions, allowing $\vee$, $\neg$ and $\implies$ to be dealt with painlessly 
(usually, also the empty set is not excluded from the internal sets in this setting). E.g., by (UF),
\[[a_\eps]\ne [b_\eps]\iff \{\eps\in (0,1): a_\eps = b_\eps\}\notin \Filter \iff \{\eps\in (0,1): a_\eps\ne b_\eps\in \Filter\},\]
hence $a \ster\!\!\ne b$ is equivalent with $\neg (a=b)$.
By transfer, it follows also that $\ster\R$ is a totally ordered field, internal sets are closed under (finite) $\cup$ and $\setminus$, lemma \ref{lemma_finite} is immediate by the total order, \dots: summarizing, a lot of inconveniences disappear.\\
This refinement is particularly useful if one uses nonstandard analysis not so much as a model for singular `real world phenomena' (as in the nonlinear theory of generalized functions), but rather as a tool, an enrichment of language and objects, with the goal to prove results about the usual (=nongeneralized) objects in analysis in an easier way. E.g., in nonstandard analysis one obtains a very concise characterization of compactness in a (nongeneralized) topological space $X$:
\[K\subseteq X \text{ is compact }\iff (\forall x\in \ster K) (\exists y \in K) (x\approx y).\]
Concerning this use of nonstandard analysis, one may safely say that the model in \cite{SL} is deprecated, and the above text does not have any aspiration to compete with nonstandard analysis in that respect.


\begin{thebibliography}{10}
\bibitem{Col84}
J.\ F.\ Colombeau, {\em New Generalized Functions and Multiplication of Distributions}, North Holland, Amsterdam, 1984.

\bibitem{GKOS}
M.~Grosser, M.~Kunzinger, M.~Oberguggenberger, R.~Steinbauer,
{\em Geometric Theory of Generalized Functions with applications to
General Relativity}, Mathematics and its Applications Vol.\ 537,
Kluwer Academic Publishers, Dordrecht, 2001.

\bibitem{Ob92}
M.~Oberguggenberger, {\em Multiplication of distributions and applications to partial differential equations}, Pitman Res.\ Not.\ 
Math.\ vol.\ 259, Longman, Harlow, 1992.

\bibitem{OPS}
M.~Oberguggenberger, S.~Pilipovi\'c, D.~Scarpalezos,
{\em Local properties of Colombeau generalized functions},
Math.\ Nachr., 256: 1--12 (2003).

\bibitem{OVInternal}
M.~Oberguggenberger, H.~Vernaeve,
{\em Internal sets and internal functions in Colombeau theory},
J.~Math.~Anal.~Appl., 341: 649--659 (2008).

\bibitem{Rob}
A.~Robinson, {\em Non-standard analysis}, North-Holland, Amsterdam, 1966.

\bibitem{SL}
C.~Schmieden, D.~Laugwitz, {\em Eine Erweiterung der Infinitesimalrechnung}, Math.\ Zeitschr., 69: 1--39 (1958).

\bibitem{HVPointwiseReg}
H.~Vernaeve,
{\em Pointwise characterizations in generalized function algebras},
Monatsh.\ Math., 158: 195--213 (2009).
\end{thebibliography}
\end{document}